\newcommand{\bproof}{\begin{proof}}
\newcommand{\eproof}{\end{proof}}
\newcommand{\GF}{{\rm GF}}
\newcommand{\C}{{\mathbb C}}
\newcommand{\PG}{{\rm PG}}
\newcommand{\Cay}{{\rm Cay}}
\newcommand{\Cl}{{\rm Cl}}
\newcommand{\calH}{\mathcal{H}}
\newcommand{\calO}{\mathcal{O}}
\numberwithin{equation}{section}
\newtheorem{thm}{Theorem}[section]
\newtheorem{lemma}[thm]{Lemma}
\newtheorem{cor}[thm]{Corollary}
\theoremstyle{definition}\newtheorem{defn}[thm]{Definition}
\newtheorem{example}[thm]{Example}
\newtheorem{question}[thm]{Question}
\newtheorem{rem}[thm]{Remark}
\newcommand{\noid}{\vskip12pt\noindent}
\newcommand{\vs}{\vskip12pt}
\newcommand{\ZZ}{\mathbb{Z}}
\newcommand{\be}{\begin{enumerate}}
\newcommand{\ee}{\end{enumerate}}
\begin{document}

\title{Genuinely nonabelian partial difference sets\\
}
\author{
John Polhill\footnote{Commonwealth University, Bloomsburg, PA, {\tt jpolhill@commonwealthu.edu}},
James A. Davis\footnote{University of Richmond, VA 23173, {\tt jdavis@richmond.edu}},
Ken Smith\footnote{Huntsville, TX 77340, {\tt kenwsmith54@gmail.com}},
Eric Swartz\footnote{William \& Mary, Williamsburg, VA 23187, {\tt easwartz@wm.edu}}
}

\date{}

\maketitle

\begin{abstract}
Strongly regular graphs (SRGs) provide a fertile area of exploration in algebraic combinatorics, integrating techniques in graph theory, linear algebra, group theory, finite fields, finite geometry, and number theory. Of particular interest are those SRGs with a large automorphism group.
If an automorphism group acts regularly (sharply transitively) on the vertices of the graph, then we may identify the graph with a subset of the group, a partial difference set (PDS), which allows us to apply techniques from group theory to examine the graph. 
Much of the work over the past four decades has concentrated on abelian PDSs using the powerful techniques of character theory.
However, little work has been done on nonabelian PDSs. 
In this paper we point out the existence of \textit{genuinely nonabelian} PDSs, i.e., PDSs for parameter sets where a nonabelian group is the only possible regular automorphism group. We include methods for demonstrating that abelian PDSs are not possible for a particular set of parameters or for a particular SRG. 
Four infinite families of genuinely nonabelian PDSs are described, two of which --  one arising from triangular graphs and one arising from Krein covers of complete graphs constructed by Godsil \cite{Godsil_1992} -- are new. We also include a new nonabelian PDS found by computer search and present some possible future directions of research.

\end{abstract}

\section{Motivation and Overview}
\label{sec:intro}

%
%

\noid
``Strongly regular graphs stand on the cusp between the random and the
highly structured."
(\cite{Cameron_2004})
			

\vs Research into strongly regular graphs and partial difference sets is a rich vein within algebraic combinatorics, involving graph theory, linear algebra, group theory, finite field theory, and algebraic number theory. (For definitions of a strongly regular graph and of a partial difference set, see Sections \ref{SRG} and \ref{PDS}, respectively.)  Constructions of PDSs correspond to projective two-weight codes \cite{Calderbank_Kantor_1986}, and PDSs also correspond to projective sets in projective geometries  \cite{Denniston_1969}.  For an excellent survey about early results related to PDSs and the connections PDSs have to various combinatorial objects, see \cite{Ma_1994b}.

Most of the work on PDSs has focused on abelian groups \cite{Ma_1984}, involving character theory, cyclotomy in finite fields, and computer searches for PDSs in high exponent groups \cite{Malandro_Smith_2020}.  We observe that many of the families of PDSs occur in $p$-groups, groups whose order is a power of a prime $p$, and the vast majority of $p$-groups are nonabelian. Just as one example, it is possible (using the DifSets package \cite{DifSets} in GAP \cite{GAP4}) to find all $(64,28,12)$ difference sets. There are $330159$ such difference sets giving $105269$ nonisomorphic symmetric designs. Of these, only $748$ designs have an abelian group acting sharply transitively on the points (this is the first time this observation has been made in print). This implies that only $0.7\%$ of $(64,28,12)$ symmetric designs arising from difference sets are constructible via a difference set in an abelian group. The conclusion we draw is that nonabelian groups are likely to contain vastly more constructions of interesting combinatorial objects, and we are only now starting to develop techniques to explore this nonabelian universe.

The purpose of this paper is to summarize the state of current knowledge regarding PDSs in nonabelian groups and how it relates to what is known in the abelian case, present some new results, and provide a number of possible future directions of interest.  The paper is organized as follows.  Section~\ref{SRG} introduces SRGs, gives a few important examples for the rest of the paper, and includes basic feasibility conditions for a SRG to exist. 
 Section~\ref{PDS} transitions to the main topic of this paper, namely PDSs and more generally automorphism groups of SRGs. Section~\ref{gen.nonabelian} defines ``genuinely nonabelian'' PDSs, and we provide a few examples of this concept. The first and last of these (Theorems \ref{mainconstruction} and \ref{thm:godsil}, respectively) are new infinite families of genuinely nonabelian PDSs, while the others are known examples that can be shown to have parameters that will not support an abelian PDS. Section~\ref{sect:nonab} sketches an outline of the techniques that have been used in exploring nonabelian PDSs, and we share some questions for further study.  Finally, Appendix~\ref{appendix} contains explicit details of a new genuinely nonabelian PDS with parameters $(512, 133, 24, 38)$.

%

\section{Introduction to Strongly Regular Graphs}
\label{SRG}
A graph is {\it regular} if there is an integer $k$ such that the degree of every vertex is $k$.
A regular graph with $v$ vertices and degree $k$ is {\it strongly regular} with parameters $(v,k,\lambda, \mu)$ if the number of paths of length two between two vertices $x$ and $y$ is dependent only on whether or not $x$ is adjacent to $y$.
If $x$ is adjacent to $y$ ($x \sim y$), then $\lambda$ represents the number of paths of length two from $x$ to $y$;
if $x$ is {\em not} adjacent to $y$ ($x \not\sim y$) then $\mu$ represents the number of paths of length two from $x$ to $y$.
Simple inclusion-exclusion shows that the complement of a $(v,k,\lambda,\mu)$ SRG is a SRG with parameters
$(v, v-k-1, v-2k+\mu-2, v-2k+\lambda)$

Strongly regular graphs were introduced by 
 R. C. Bose in 
  \cite{Bose_1963}.
 In that inaugural paper, Bose also introduced the concept of a partial geometry ${\rm pg}(r,k,t)$ and showed that the incidence relation on a partial geometry gave a SRG on the points of the geometry.
 Since that introduction, numerous examples of SRGs have been discovered \cite{Brouwer_VanMaldeghem_2022}, often expressed in terms of an underlying geometry  over a finite field.
 In particular, SRGs arise as the point graphs of partial geometries and generalized quadrangles.
 
 


The theory of strongly regular graphs is well developed.  See, for example,
 \cite[Chapter 21, 261--282]{vanLint_Wilson_2001},
 \cite[Sections 4.3--4.5, 82--100]{Biggs_White_1979}, 
  \cite[Chapter VII.11, 852--868]{Colbourn_Dinitz_2007},
 \cite[Ssections 9.4 \& 9.5, 118--125]{Hall_1986},
 \cite{Hubaut_1975}, and
 \cite[Section VIII.3, 270-276]{Bollobas_1998}.
The existence of a $(v,k,\lambda, \mu)$ strongly regular graph requires that the parameters satisfy a number of feasibility conditions.


\begin{lemma}[First feasibility condition for a SRG]
\label{counting condition}
 A requirement for the existence of a strongly regular graph with parameters $(v,k,\lambda, \mu)$ is 
$k(k-\lambda-1) = (v-k-1)\mu \label{feasibility1}$.
\end{lemma}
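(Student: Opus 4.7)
The plan is to prove this identity by a standard double-counting argument applied to paths of length two emanating from a fixed vertex. Fix any vertex $x$ of the SRG. Partition the remaining $v-1$ vertices into the set $N(x)$ of the $k$ neighbors of $x$ and the set $\overline{N}(x)$ of the $v-k-1$ non-neighbors of $x$. I will count the number of edges between $N(x)$ and $\overline{N}(x)$ in two different ways, once by summing over the endpoint in $N(x)$ and once by summing over the endpoint in $\overline{N}(x)$; the two counts must agree, which will yield exactly the claimed identity.

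First I would count from the $N(x)$ side. Pick any neighbor $w$ of $x$. Since $w$ has degree $k$, it has $k$ neighbors in total. One of these is $x$ itself, and since $w \sim x$, the number of common neighbors of $w$ and $x$ is $\lambda$ by the definition of a strongly regular graph; these $\lambda$ common neighbors lie in $N(x)$. Hence the number of neighbors of $w$ lying in $\overline{N}(x)$ is $k - 1 - \lambda$. Summing over all $k$ choices of $w \in N(x)$ gives $k(k-\lambda-1)$ edges between $N(x)$ and $\overline{N}(x)$.

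Next I would count the same edges from the $\overline{N}(x)$ side. Pick any non-neighbor $y$ of $x$ (with $y \neq x$). The number of common neighbors of $x$ and $y$ is $\mu$ by definition, and each such common neighbor lies in $N(x)$; these are precisely the neighbors of $y$ inside $N(x)$. Summing over all $v-k-1$ choices of $y \in \overline{N}(x)$ gives $(v-k-1)\mu$ edges between $N(x)$ and $\overline{N}(x)$. Equating the two counts yields $k(k-\lambda-1) = (v-k-1)\mu$.

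There is no serious obstacle here; the only thing to be careful about is bookkeeping, namely correctly excluding $x$ itself from $w$'s neighborhood when counting from the $N(x)$ side, and correctly interpreting "paths of length two between $x$ and $y$" as "common neighbors of $x$ and $y$" so that the parameters $\lambda$ and $\mu$ are applied in the right cases. Both are immediate from the definitions given just above the lemma.
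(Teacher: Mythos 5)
Your double-counting argument is correct: partitioning the vertices other than $x$ into $N(x)$ and $\overline{N}(x)$ and counting the edges between these two sets from each side gives exactly $k(k-\lambda-1)=(v-k-1)\mu$, and your bookkeeping (excluding $x$ from $w$'s neighborhood, identifying common neighbors with paths of length two) is handled properly. The paper states this lemma without proof, treating it as a standard feasibility condition, so there is nothing to compare against; your argument is the canonical one and would serve as a complete proof.
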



Strongly regular graphs for which $\mu=0$ have $k=\lambda+1$ and are disjoint copies of the complete graph $K_{k+1}$.  
If $\mu=k$ then Lemma~\ref{counting condition} implies that $v-k=k-\lambda$ and so $v-2k+\lambda=0.$
In this case the complementary graph has $\mu=0$ and so is the union of disjoint copies of a complete graph.
We consider these two cases as ``trivial."
From here on we will assume that $0 < \mu < k$.
Our assumption about $\mu$ also forces $\lambda < k-1$.
If a strongly regular graph has degree $k$ greater than or equal to $v/2$ then its complement is also a strongly regular graph with degree less than $v/2,$ so we will usually study graphs with $k < v/2.$

\noid
Every combinatorics research problem has built into it a basic question, ``Identify the objects in question and begin to list them..."  
We will begin with some examples.

%
%
%

\begin{example}
\label{triangular}
Consider a set $X$ of size $n$ and take as vertices the set $\begin{pmatrix} X \\ 2 \end{pmatrix}$, the subsets of all pairs of elements from $X$.  Two pairs are adjacent if they share exactly one element.  
This gives the triangular SRG $T_n$ with parameters \[\left(\dfrac{n(n-1)}{2}, 2(n-2), n-2, 4\right).\]
The complement of $T_n$ 
 is a SRG $T_n^C$ with parameters \[\left(\frac{n(n-1)}{2}, \frac{(n-2)(n-3)}{2},\frac{(n-4)(n-5)}{2}, \frac{(n-3)(n-4)}{2}\right).\]
 \end{example}

\begin{example}
\label{Paley}
Take a finite field $F$ of order $v=4\mu+1$. 
 Let $S$ be the set of nonzero squares in $F$ and join two vertices if their difference is a square, that is, $x\sim y \iff x-y \in S.$
 This graph turns out (with a little algebra) to be strongly regular with parameters $(4\mu+1, 2\mu, \mu-1,\mu).$
 Here the group $G=(F,+)$ acting sharply transitively on the graph is the additive group of the field.
This graph, known as the {\em Paley graph}~\cite{Paley_1933}, is isomorphic to its own complement.
\end{example}

\begin{example}
\label{LS}
There are two interesting infinite families of graphs that have the designation ``Latin Square."
Both families have $m^2$ vertices.
The (positive) latin square graphs  $PL_m(r)$ may be created by latin squares and have parameters 
$ v = m^2, k=r(m-1), \lambda = r^2-3r+m, \mu = r(r-1).$
This family is well known and graphs exist for most parameters.
The second infinite family of graphs are the negative latin square graphs, $NL_m(r)$;
these graphs have parameters $v = m^2, k=r(m+1), \lambda = r^2+3r-m, \mu = r(r+1).$
The existence of $NL_m(r)$ is rarer than in the first parameter set but graphs do arise in some cases in which a rank 3 group acts on an underlying field of order $m^2$ and may give rise to 2-weight codes.
  (See  \cite{vanLint_Schrijver_1981} for an early exploration of these graphs.)
  \end{example}

\subsection{The spectrum of a strongly regular graph}
%
The adjacency matrix $A$ of a graph is a $v \times v$ matrix whose rows and columns are indexed by the vertices, and the entry in the $(v_i, v_j)$ position is 1 if $v_i$ and $v_j$ are adjacent, $0$ otherwise. The adjacency matrix $A$ of a SRG satisfies the equation
\begin{equation*} A^2 = kI + \lambda A + \mu A^c, \end{equation*}
where $A^c = J-I-A$ is the adjacency matrix of the complementary graph.
(The matrix $J$ here is a square matrix of all ones.)
Substituting $J-I-A$ for $A^c$ and collecting like terms, we can rewrite this as
\begin{equation*} A^2 -(\lambda-\mu) A - (k-\mu) I  =\mu J. \end{equation*}
The $v\times 1$ column vector $\vec{1}$ is an eigenvector of $A$ with eigenvalue $k$.
If the graph $\Gamma$ is connected then the eigenspace corresponding to eigenvalue $k$ has dimension 1.
The eigenvectors with different eigenvalues will be orthogonal (since $A$ is symmetric)
and so any eigenvector with an eigenvalue distinct from $k$ will be be a root of the quadratic polynomial
\begin{equation*}F(x):=  x^2-(\lambda-\mu) x - (k-\mu). \label{quadratic}\end{equation*}
The discriminant of this quadratic polynomial is
\begin{equation*} \Delta := (\lambda-\mu)^2+4(k-\mu), \end{equation*}
and so the two roots of this polynomial are 
\begin{equation*} \theta_1, \theta_2 := \frac12 (\lambda-\mu \pm \sqrt{\Delta}). \end{equation*}
We will later need to think in terms of sum and difference of the nontrivial eigenvalues: $\theta_1+\theta_2 = \lambda-\mu;$ and 
$ \theta_1-\theta_2 = \sqrt{\Delta}.$ 
The difference between the nontrivial eigenvalues, $\sqrt{\Delta}$, will be an especially important parameter in the study of PDSs.

We summarize this information as a theorem.

\begin{thm}
\label{eigenvalues}
The eigenvalues of the adjacency matrix of a $(v,k,\lambda, \mu)$ SRG are $k, \theta_1, \theta_2$ for $F(\theta_1)=F(\theta_2)=0$.
\end{thm}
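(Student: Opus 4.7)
The plan is essentially to assemble observations already laid out in the preceding paragraphs into a clean deduction. First I would note that the hypotheses of being strongly regular give the matrix identity $A^2 = kI + \lambda A + \mu A^c$, which after the substitution $A^c = J - I - A$ rearranges to
\begin{equation*}
A^2 - (\lambda-\mu)A - (k-\mu)I = \mu J.
\end{equation*}
This is the identity I will exploit throughout.

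Next I would pin down the eigenvalue $k$. Because the graph is $k$-regular, every row sum of $A$ is $k$, so $A\vec{1} = k\vec{1}$; thus $k$ is always an eigenvalue, with $\vec{1}$ in the eigenspace. Since $A$ is a real symmetric matrix, it is orthogonally diagonalizable, so the orthogonal complement of $\vec{1}$ is $A$-invariant and admits a basis of eigenvectors of $A$. Let $v$ be any such eigenvector, with $Av = \theta v$ and $\vec{1}^\top v = 0$. Then $Jv = \vec{1}(\vec{1}^\top v) = 0$, so applying the boxed identity to $v$ gives
\begin{equation*}
\theta^2 v - (\lambda-\mu)\theta v - (k-\mu)v = 0,
\end{equation*}
i.e., $F(\theta) = 0$. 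Therefore $\theta \in \{\theta_1,\theta_2\}$, and every eigenvalue of $A$ lies in $\{k,\theta_1,\theta_2\}$.

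There is essentially no obstacle here, because the algebraic identity and the observation about $\vec{1}$ were both established in the run-up to the statement. The only point that deserves a sentence of justification (rather than being treated as routine) is the appeal to the spectral theorem, which is what guarantees that the quadratic roots $\theta_1,\theta_2$ actually exhaust the remaining spectrum of $A$ instead of merely bounding it. Once that is stated, the theorem follows immediately.
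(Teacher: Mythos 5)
Your argument is correct and is essentially the same as the paper's own derivation, which appears in the paragraphs immediately preceding the theorem statement (the identity $A^2-(\lambda-\mu)A-(k-\mu)I=\mu J$, the observation that $\vec{1}$ is a $k$-eigenvector, and the use of symmetry/orthogonality to conclude that any other eigenvalue is a root of $F$). The only cosmetic issue is your reference to ``the boxed identity'' when no equation is actually boxed; otherwise nothing needs to change.
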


We can apply Theorem~\ref{eigenvalues} to the triangular graph found in Example~\ref{triangular}.

\begin{example}
A triangular graph $T_n$ has parameters 
\[\left(\dfrac{n(n-1)}{2}, 2(n-2), n-2, 4\right),\]
and so 
$\Delta = (n-2)^2,
 \theta_1+\theta_2 =n-6,$
and 
$  \theta_1-\theta_2 =n-2.$
This implies
$\theta_1=n-4, \theta_2=-2.$
\end{example}


\subsection{The Multiplicity Condition}
The eigenvalues of (the adjacency matrix of) a SRG lead us to two more feasibility conditions.
%
%
%
If we change the basis of our vector space $V$ from the standard basis to a basis of eigenvectors, then we essentially diagonalize the matrix $A$, with eigenvalues on the diagonal.
Let $m_i$ represent the number of times the eigenvalue $\theta_i$ occurs in this diagonal matrix, $i \in \{ 1,2 \}$; since $A$ is symmetric, all of the eigenvalues are real and hence we must have \begin{equation}
\label{traceI}
1+m_1+m_2 = v.                                                                                                                                                                                               
                                                                                                                      \end{equation}  The trace of a matrix is the sum of the elements on the diagonal, and the diagonal entries of $A$ are all zero; hence, $Tr(A) = 0,$ implying 
 \begin{equation} k + m_1\theta_1 + m_2\theta_2 = 0.  \label{traceA}  \end{equation}
Equations (\ref{traceI}) and (\ref{traceA}) place strong conditions on the positive integers $m_1$ and $m_2$ as shown when we apply these equations to the triangular graph.

\begin{example}
These two equations for a triangular graph $T_n$ give 
$m_1 + m_2 =v-1= (n^2-n-2)/2 = (n+1)(n-2)/2$
and
$m_1(n-4) + m_2(-2) =-2(n-2).$
Solving for $m_1$ and $m_2$ we have
%
$$m_1=n-1,\, m_2=\frac{n(n-3)}{2}.$$
\end{example}

%
%
%

\vs
Suppose that $\Delta =(\lambda-\mu)^2+4(k-\mu) $ is not the square of an integer.
Then $\sqrt{\Delta}$ is not rational and so the trace of the adjacency matrix is 
\[k + \frac12 m_1(\lambda-\mu+\sqrt{\Delta})+ \frac12 m_2(\lambda-\mu-\sqrt{\Delta})=0.\]
Thus, $(m_1-m_2)\sqrt{\Delta}$ must be rational. 
This can occur only if $m_1-m_2=0.$
This forces $m_1=m_2= (v-1)/2$ and our graph is a conference graph (defined to be a graph with the same parameters as a Paley graph, namely $(4\mu+1, 2\mu, \mu-1, \mu)$).
This gives our second feasibility condition:
\begin{lemma}
\label{conference graph}
If $\Gamma$ is a $(v,k,\lambda,\mu)$ SRG such that 
$\Delta = (\lambda-\mu)^2+4(k-\mu)$ is not a perfect square then $\Gamma$ is a conference graph.
\end{lemma}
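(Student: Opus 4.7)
The plan is to work directly with the two multiplicity/trace equations $(\ref{traceI})$ and $(\ref{traceA})$, exploiting the irrationality of $\sqrt{\Delta}$ to force a collapse in the parameters down to Paley (conference) form. This is essentially already outlined in the paragraph preceding the statement; my proposal is to flesh it out systematically.

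First I would substitute $\theta_{1,2} = \tfrac{1}{2}(\lambda - \mu \pm \sqrt{\Delta})$ into $(\ref{traceA})$ and rewrite the result as a linear combination of $1$ and $\sqrt{\Delta}$ over $\mathbb{Q}$. Grouping like terms gives
\[
\Bigl(k + \tfrac{1}{2}(m_1+m_2)(\lambda-\mu)\Bigr) + \tfrac{1}{2}(m_1 - m_2)\sqrt{\Delta} = 0.
\]
Since $\Delta$ is a non-square positive integer, $\sqrt{\Delta}$ is irrational, while the bracketed coefficients are rational. This forces the coefficient of $\sqrt{\Delta}$ to vanish, giving $m_1 = m_2$. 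Combining with $(\ref{traceI})$ yields $m_1 = m_2 = (v-1)/2$, which in particular shows $v$ is odd.

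Setting $m_1 = m_2$ in the rational part of the displayed equation produces $k + \tfrac{v-1}{2}(\lambda - \mu) = 0$, i.e.\ $\lambda - \mu = -\tfrac{2k}{v-1}$. I would then pair this with the first feasibility condition (Lemma~\ref{counting condition}), $k(k-\lambda-1) = (v-k-1)\mu$, and solve the resulting two linear/quadratic relations in $\lambda, \mu$. A short calculation gives $k = (v-1)/2$, $\mu = (v-1)/4$, $\lambda = \mu - 1$. Writing $\mu = t$ yields the parameter tuple $(4t+1,\, 2t,\, t-1,\, t)$, which is precisely the definition of a conference graph.

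The main obstacle is not conceptual — the logical heart is the single step forcing $m_1 = m_2$ from irrationality — but rather making sure the algebra in the final step is carried out cleanly enough to recover the full Paley parameter set, rather than merely the relation $\lambda - \mu = -2k/(v-1)$. In particular, one should verify that $4 \mid v-1$ drops out automatically from requiring $\mu$ to be an integer, rather than being imposed as a separate hypothesis.
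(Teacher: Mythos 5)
Your proposal follows the paper's argument exactly: both substitute the eigenvalues $\tfrac12(\lambda-\mu\pm\sqrt{\Delta})$ into the trace equation and use the irrationality of $\sqrt{\Delta}$ to force $m_1=m_2=(v-1)/2$; the paper essentially stops there, while you carry the algebra through to the Paley parameters. One small correction to that final step: the two relations $\lambda-\mu=-2k/(v-1)$ and $k(k-\lambda-1)=(v-k-1)\mu$ alone do \emph{not} determine $k$ (they leave a two-parameter family) --- you must first invoke the integrality of $\lambda-\mu$ together with $0<2k/(v-1)<2$ to get $\lambda-\mu=-1$ and hence $k=(v-1)/2$, after which $\mu=(v-1)/4$ and the divisibility $4\mid v-1$ drop out as you describe.
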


This case is fairly restrictive.  The four parameters are stated exactly, in terms of the single parameter $\mu.$
Here the eigenvalues are 
\[\theta_i = \frac{-1+ (-1)^i \sqrt{4\mu+1}}{2}= \frac{-1+ (-1)^i \sqrt{v}}{2},\] and their multiplicities are the same: \[m_1 = m_2 = \frac{v-1}{2}.\]


%



A SRG where $\Delta$ is the square of an integer is called a Type II SRG.
The multiplicity of an eigenvalue 
\[\theta_1=\frac{(\lambda-\mu)+\sqrt{\Delta}}{2}\]
is
\[m_1=\frac{1}{2} \left((v-1)-\frac{2k+(v-1)(\lambda-\mu)}{\sqrt{\Delta}}\right).\]
This gives our third feasibility condition.
\begin{lemma}
\label{multiplicity}
If a $(v,k,\lambda,\mu)$ SRG satisfies
$\Delta = (\lambda-\mu)^2+4(k-\mu)$
 is a perfect square
then
\[m_1=\frac{1}{2} \left((v-1)-\frac{2k+(v-1)(\lambda-\mu)}{\sqrt{\Delta}}\right)\] is a nonnegative integer.
\end{lemma}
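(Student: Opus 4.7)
My plan is to treat the two identities~(\ref{traceI}) and~(\ref{traceA}) as a $2\times 2$ linear system in the unknowns $m_1, m_2$, solve it explicitly using Theorem~\ref{eigenvalues}, and then observe that integrality and nonnegativity of $m_1$ are automatic from its definition as an eigenspace dimension, so nothing further is needed.

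Concretely, I would multiply $m_1+m_2=v-1$ through by $\theta_2$ and subtract the result from $m_1\theta_1+m_2\theta_2=-k$ to eliminate $m_2$, obtaining $m_1(\theta_1-\theta_2) = -k - (v-1)\theta_2$. The hypothesis that $\Delta$ is a perfect square guarantees $\sqrt{\Delta}$ is a positive integer, and with the convention $\theta_1 = \tfrac12(\lambda-\mu+\sqrt{\Delta})$, $\theta_2 = \tfrac12(\lambda-\mu-\sqrt{\Delta})$ established after Theorem~\ref{eigenvalues} we have $\theta_1-\theta_2=\sqrt{\Delta}\neq 0$, so I can divide to get $m_1 = \frac{-k-(v-1)\theta_2}{\sqrt{\Delta}}$. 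Substituting the closed form for $\theta_2$ and simplifying — the $(v-1)\sqrt{\Delta}/2$ term in the numerator produces the leading $\tfrac12(v-1)$, while the remainder combines into $-\tfrac{2k+(v-1)(\lambda-\mu)}{2\sqrt{\Delta}}$ — yields the formula in the lemma.

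For the arithmetic conclusion, no additional number-theoretic argument is required: $m_1$ is by definition the dimension of the eigenspace of the real symmetric $0/1$ matrix $A$ associated to the real eigenvalue $\theta_1$, and so is automatically a nonnegative integer. The content of the lemma is then that \emph{the closed-form expression} computed above must equal a nonnegative integer, which is exactly the integrality constraint this places on admissible parameter tuples $(v,k,\lambda,\mu)$. I do not anticipate a significant obstacle here; the one pitfall to guard against is a sign error arising from confusing $\theta_1-\theta_2$ with $\theta_2-\theta_1$, which is avoided by consistently using the ordering implicit in the $\pm$ convention above.
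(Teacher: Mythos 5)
Your proposal is correct and follows essentially the same route as the paper: the paper derives the multiplicity formula implicitly from the same two trace equations (\ref{traceI}) and (\ref{traceA}) that you solve as a linear system, and the integrality/nonnegativity conclusion is likewise automatic there because $m_1$ is an eigenspace dimension. Your algebra checks out, so nothing further is needed.
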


We will use Lemma~\ref{multiplicity} to rule out certain parameter sets from having an abelian PDS.

\section{Automorphisms of SRGs, Cayley Graphs, and PDSs}
\label{PDS}

  The history of SRGs is intimately connected with the study of the symmetries of such graphs.  A number of the sporadic simple groups (including the Higman-Sims group) were discovered as rank three permutation groups.  (A \textit{rank three permutation group} is a group acting transitively on a set such that the stabilizer of any element has exactly three orbits.)  Indeed, the constructions of simple groups led to an analysis of rank three permutation groups by Higman (\cite{Higman_1964},  \cite{Higman_1971a},  \cite{Higman_1971b}).
Rank three permutation groups give SRGs in a natural way: of the three orbits the stabilizer of a point has, one is the point itself, another corresponds to the neighbors of the point in the corresponding SRG, and the last orbit corresponds to the set of all non-neighbors.

 As with any combinatorial object, the symmetries (automorphisms) of SRGs are of interest. 
  The early constructions of SRGs, as geometric objects over a finite field, naturally gave rise to large automorphism groups.
  If a SRG has an automorphism group acting regularly (that is, sharply transitively) on vertices then we may construct the object as a \textit{Cayley graph}. Let $G$ be a finite group written multiplicatively and $S \subseteq G$ a subset with the property that $1 \not\in S$ and $S = S^{(-1)}$, that is, $S$ is closed under the property of taking inverses.  
We may then create a graph $\Cay(G,S)$ by agreeing that the vertex set is labeled by the set $G$, and two vertices $x, y \in G$ are adjacent (denoted $x \sim y$) if and only if $xy^{-1} \in S.$
The graph $\Cay(G,S)$ is the Cayley graph with group $G$ and generating set $S$.
  
The edge-defining property 
$xy^{-1} \in S$ can be expressed by saying that $x \sim y$ if and only if there is an element $s \in S$ such that $x=sy$.  
From this viewpoint, premultiplication by members of $S$ maps vertices (such as $y$) to adjacent vertices ($sy$).
Thus we can see that the set $S$ represents the neighborhood of the identity element and the size of the set $S$ is clearly the degree of the graph.
Thus a Cayley graph is regular.  
However, most Cayley graphs are not SRGs: the Paley graphs (squares in a finite field $F$ of characteristic $1 \bmod{4}$) and the Lattice graphs ($\{(x,0): x \ne 0\} \cup \{(0,x): x \ne 0\}$ in $\ZZ_n\times\ZZ_n$) are two Cayley graphs that are SRGs.

If $\Cay(G,S)$ is a Cayley graph, it is common to  apply adjectives describing  the group to the graph.  Thus we will speak of an {\em abelian} Cayley graph if $G$ is abelian, and so on.

Let $G$ be a finite group of order $v$ and $S$ a subset of $G$.
For $g \in G$, let $\lambda_g$ be the left regular representation. 
That is, order the elements of $G$ and construct the $v\times v$ $(0,1)$-matrix $\lambda_g$ where the $(i,j)$ entry of $\lambda_g$ is 1 if and only if $g(g_j)=g_i.$
Each matrix $\lambda_g$ is a permutation matrix describing how left-multiplication by $g$ permutes the elements of $G$.
If we define $\Lambda(S):= \sum_{s \in S} \lambda_s$, then $\Lambda(S)$ is a $(0,1)$-matrix with row and column sum equal to $|S|$.

Suppose we have a graph on $v$ vertices with adjacency matrix $A$.
Suppose, furthermore, that a group $G$ acts sharply transitively on the vertex set of the graph.
Choose a vertex $v_0$ and let $N(v_0)$ be the neighborhood of $v_0$.
Define
\[ S := \{g \in G: g(v_0) \in N(v_0)\}.\]
Then $S$ is a subset of $G$ with the property that $\Lambda(S)$ is an adjacency matrix for the graph.
If the underlying graph is strongly regular then $\Lambda(S)$, being an adjacency matrix for the underlying graph, has the same spectrum.

We now introduce a new perspective on a Cayley graph that is a SRG, which was introduced by S. L. Ma in  \cite{Ma_1984}. Let $G$ be a group and $X = \sum_{g \in G} a_g g$ an element of the group ring $\ZZ[G].$
Let $m$ be an integer. 
We define $X^{(m)} := \sum_{g \in G} a_g g^m$.
(Note the parentheses around the integer $m$, so that we distinguish $X^{(m)}$ from $X^{m}.$)
We will often follow the convention that a (sub)set $S$ of a group $G$ will correspond to the group ring element $S = \sum_{s \in S} s$, and the possible confusion between $S$ being a set and $S$ being a group ring element will be determined by the context. We note that the group ring element $S$ will satisfy $S^{(-1)} = S$, and that leads to the following definition.

\begin{defn}
\label{PDSdef}
A $(v,k,\lambda, \mu)$ {\it partial difference set} (PDS) in a group $G$ is a set $S$ such that, in the group ring $\ZZ[G]$,
\[ SG=kG, \ S^2=k+\lambda S+\mu(G-S-1).\]
\end{defn}

The discussion, above, gives the following result:
\begin{lemma}
\label{groupringPDS}
A $(v,k,\lambda, \mu)$  PDS $S$ is equivalent to a $(v,k,\lambda, \mu)$  Cayley graph $\Cay(G,S)$.
\end{lemma}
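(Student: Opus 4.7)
The plan is to translate between the group ring formalism and the combinatorics of paths in $\Cay(G,S)$, using the algebra homomorphism $\ZZ[G] \to M_v(\ZZ)$ that extends $g \mapsto \lambda_g$ linearly. Under this map $\Lambda(X)\Lambda(Y) = \Lambda(XY)$ for all $X, Y \in \ZZ[G]$, so identities in the group ring transfer verbatim to matrix identities among the $\Lambda(X)$; conversely, since distinct group elements give linearly independent permutation matrices, a matrix identity pulls back uniquely to $\ZZ[G]$. This correspondence is what makes the equivalence routine.

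For the forward direction, I would assume $S$ is a $(v,k,\lambda,\mu)$ PDS and compute the coefficient of an arbitrary $g \in G$ on each side of $S^2 = k + \lambda S + \mu(G-S-1)$. The coefficient of $g$ in $S^2$ equals $\#\{(s_1,s_2) \in S \times S : s_1 s_2 = g\}$, which I would reinterpret combinatorially: a length-two path from vertex $y$ to vertex $x$ in $\Cay(G,S)$ is an intermediate vertex $z$ with $xz^{-1}, zy^{-1} \in S$, and taking $s_1 = xz^{-1}$, $s_2 = zy^{-1}$ gives $s_1 s_2 = xy^{-1}$. Hence the number of length-two paths from $y$ to $x$ equals the coefficient of $xy^{-1}$ in $S^2$. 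Reading off the PDS equation, this count is $\lambda$ whenever $xy^{-1} \in S$ (i.e., $x \sim y$) and $\mu$ whenever $xy^{-1} \notin S \cup \{1\}$ (i.e., $x \not\sim y$ and $x \ne y$), which is exactly the strongly regular condition. Regularity of degree $k$ follows from $|S|=k$, which is encoded in $SG = kG$. The reverse direction simply runs the same bookkeeping backwards: assuming $\Cay(G,S)$ is a $(v,k,\lambda,\mu)$ SRG, $|S|=k$ gives $SG = kG$ at once, and the coefficient of each $g \in G$ in $S^2$ is determined by the path-count interpretation, yielding $k$, $\lambda$, or $\mu$ respectively for $g = 1$, $g \in S$, and $g \in G \setminus (S \cup \{1\})$.

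No step here is genuinely difficult, because the whole content is a dictionary between $\ZZ[G]$ and paths in the Cayley graph. The one place to take care is the coefficient of the identity: the forward direction needs $1 \notin S$ so that the PDS equation has no unintended $\lambda$ contribution at $g = 1$, while the reverse direction needs $S = S^{(-1)}$ so that the identity coefficient of $S^2$ counts all $k$ ways to return to $1$ via $s \cdot s^{-1}$. Both conventions are built into the definitions of PDS and Cayley graph as used in the paper, so this bookkeeping is automatic rather than an obstacle.
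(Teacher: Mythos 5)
Your proposal is correct and takes essentially the same route as the paper, which states the lemma as a consequence of its preceding discussion of the left regular representation $\Lambda(S)=\sum_{s\in S}\lambda_s$ and the adjacency-matrix equation $A^2=kI+\lambda A+\mu A^c$; your coefficient-of-$g$ path count is just the entrywise form of that same dictionary, and your remarks on the identity coefficient ($1\notin S$ and $S=S^{(-1)}$) match the conventions the paper builds into Definition~\ref{PDSdef}.
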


Indeed, this shows that a $(v, k, \lambda, \mu)$ PDS $S$, when considered as a subset of a group $G$, requires $|G| = v$, $|S| = k$, and every nonidentity element $g \in G$ can be written in either $\lambda$ or $\mu$ different ways (depending on whether or not $g$ is in $S$) as $xy^{-1}$, where $x, y \in S$.  Furthermore, we typically require $1 \notin S$ so that the corresponding SRG does not contain loops.  Moreover, we say that a PDS is a \textit{Type II PDS} if the corresponding SRG is a Type II SRG.

One tool that has been used in the abelian case to construct PDSs is known as {\em multipliers}. If $\phi$ is a group automorphism on a group $G$ and $S \subset G$ is a $(v,k,\lambda, \mu)$ PDS in $G$, then a simple computation shows that $\phi(S)$ is also a $(v,k,\lambda,\mu)$ PDS in $G$. If the automorphism takes the form $\phi(g) = g^m$ for an integer $m$, then we call the automorphism a {\em multiplier}. A powerful result in {\it abelian} PDSs is Ma's Multiplier Theorem \cite{Ma_1984}:

\begin{lemma}[Ma's Multiplier Theorem]
\label{multiplierthm}
If $S$ is a Type II PDS in an abelian group $G$ and if $m$ is an integer relatively prime to the order of $G$, then $S^{(m)} =S.$
\end{lemma}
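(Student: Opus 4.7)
The natural tool for an abelian PDS is character theory, so my plan is to translate both the PDS identity and the proposed multiplier equality into character values on $\widehat{G}$ and then exploit the rationality that comes from the Type II hypothesis via Galois theory.

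First, I would apply an arbitrary character $\chi \in \widehat{G}$ to the PDS identity of Definition~\ref{PDSdef}. Rewriting the right side as $(k-\mu)\cdot 1 + (\lambda-\mu)S + \mu G$ and using $\chi(1)=1$ together with $\chi(G) = 0$ for every nontrivial $\chi$, I obtain
\[
\chi(S)^2 \;=\; (k-\mu) + (\lambda-\mu)\chi(S)
\]
for every nontrivial $\chi$, while $\chi_0(S) = k$ for the trivial character. Thus each nontrivial $\chi(S)$ is a root of the quadratic $F(x)$ from Theorem~\ref{eigenvalues}, i.e.\ $\chi(S) \in \{\theta_1,\theta_2\}$. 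Since $S$ is Type II, $\Delta=(\lambda-\mu)^2+4(k-\mu)$ is a perfect square; a quick parity check shows $\lambda-\mu$ and $\sqrt{\Delta}$ then have the same parity, so in fact $\theta_1,\theta_2 \in \mathbb{Z}$. This rationality is the crux of the proof and the step where the Type II hypothesis is essential.

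Next I would bring in the Galois action. Writing $n=|G|$, all character values lie in the cyclotomic field $\mathbb{Q}(\zeta_n)$, and for $(m,n)=1$ the Galois automorphism $\sigma_m \in \mathrm{Gal}(\mathbb{Q}(\zeta_n)/\mathbb{Q})$ sending $\zeta_n \mapsto \zeta_n^m$ satisfies $\sigma_m(\chi(g)) = \chi(g)^m = \chi(g^m)$, so
\[
\sigma_m(\chi(S)) \;=\; \sum_{s\in S}\chi(s^m) \;=\; \chi(S^{(m)}).
\]
Since the previous step showed $\chi(S) \in \mathbb{Z}$, the Galois automorphism $\sigma_m$ fixes it, giving $\chi(S^{(m)}) = \chi(S)$ for every nontrivial $\chi$; the trivial character case is immediate because $|S^{(m)}| = |S| = k$.

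Finally, since $\widehat{G}$ separates elements of the group ring $\mathbb{Z}[G]$ for abelian $G$ (equivalently, the character table is invertible), the equality of all character values forces $S^{(m)} = S$ as elements of $\mathbb{Z}[G]$, and hence as subsets of $G$. The main obstacle, as noted, is the rationality of $\chi(S)$: without the Type II condition one only gets $\chi(S) \in \{\theta_1,\theta_2\} \subseteq \mathbb{Q}(\sqrt{\Delta})$, and $\sigma_m$ might permute the two roots, which is exactly the behavior seen in the conference graph setting of Lemma~\ref{conference graph}.
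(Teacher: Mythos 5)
Your proof is correct and complete. The paper itself offers no proof of this lemma --- it is quoted from Ma's 1984 paper --- and your argument (every nontrivial character value $\chi(S)$ is a root of $F(x)$, hence lies in $\{\theta_1,\theta_2\}\subseteq\mathbb{Z}$ by the Type II hypothesis, hence is fixed by the Galois automorphism $\sigma_m$, after which Fourier inversion gives $S^{(m)}=S$) is exactly the standard character-theoretic proof found in the cited source. The one step worth a second look, the parity claim that $\sqrt{\Delta}$ and $\lambda-\mu$ have the same parity, does hold because $\Delta=(\lambda-\mu)^2+4(k-\mu)$; alternatively you could skip it entirely by noting that $\chi(S)$ is an algebraic integer, so its rationality already forces $\chi(S)\in\mathbb{Z}$.
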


We will observe that this lemma does not necessarily hold in nonabelian groups: namely, we will have explicit examples in the next section so that $S$ is a {\it nonabelian} PDS and $S^{(m)} \neq S$.

\section{Genuinely Nonabelian PDSs}
\label{gen.nonabelian}
We now come to the key definition in the paper. 
\begin{defn}
We will call a PDS {\it genuinely nonabelian} if the underlying strongly regular graph $\Cay(G,S)$ has no abelian automorphism group acting sharply transitively on the vertices.
We will call a set of parameters $(v,k,\lambda,\mu)$ {\it genuinely nonabelian} if there are nonabelian PDSs with those parameters but there are no abelian PDSs.
\end{defn}
The first known example of a genuinely nonabelian combinatorial object was the Hadamard difference set with parameters $(100, 45, 20)$ found in  \cite{Smith_1995}.  By removing the identity, we get a nonabelian PDS with parameters $(100,44, 18,20).$  
It had been proven previously that these objects did not exist in abelian groups.
Similarly, nonabelian PDSs found in  \cite{Jorgensen_Klin_2003} have parameters 
$(100,22,0,6),$ $(100,36,14,12),$ $(100,45,20,20),$ and $(100,44,18,20),$ and there are no abelian PDSs with these parameters. 

In order to show that a set of PDS parameters is genuinely nonabelian, we need to develop techniques for showing that no abelian PDS can exist with those parameters. The first technique requires that we define a character of an abelian group $G$: the map $\chi$ from $G$ to the multiplicative group $\C$ is called a character if $\chi$ is a homomorphism. The set of all characters of $G$ is denoted $G^*$. 
The following is well known.

\begin{lemma}
\label{dualgroup}
If $G$ is an abelian group, then $G^*$, under the operation of pointwise multiplication, is a group isomorphic to $G$.
\end{lemma}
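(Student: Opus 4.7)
The plan is to use the fundamental theorem of finite abelian groups to reduce to the cyclic case, and then to handle cyclic groups by direct computation with roots of unity.

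First I would verify that $G^*$ is in fact a group under pointwise multiplication. The trivial character $\chi_0 \equiv 1$ serves as the identity; the inverse of $\chi$ is the map $g \mapsto \chi(g^{-1}) = \overline{\chi(g)}$, which is a homomorphism; and associativity is inherited from $(\C^\times,\cdot)$. Since $G$ is finite, each $\chi(g)$ is a root of unity, so $G^*$ is itself finite.

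Next I would establish two auxiliary facts. \textbf{Cyclic case.} If $G = \langle g \rangle$ has order $n$, any $\chi \in G^*$ is determined by $\chi(g)$, which must satisfy $\chi(g)^n = \chi(g^n) = 1$; conversely, any choice of an $n$-th root of unity $\zeta$ yields a well-defined character via $g^j \mapsto \zeta^j$. Hence $G^*$ is canonically identified with the group $\mu_n$ of $n$-th roots of unity under multiplication, which is cyclic of order $n$; fixing a primitive $n$-th root of unity gives $G^* \cong \Z_n \cong G$. \textbf{Products.} If $H = H_1 \times H_2$, then any character $\chi$ of $H$ restricts to a character $\chi_i$ on each factor $H_i$ (viewed as a subgroup), and $\chi(h_1,h_2) = \chi_1(h_1)\chi_2(h_2)$; conversely, any such product is a character of $H$. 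This yields a group isomorphism $(H_1 \times H_2)^* \cong H_1^* \times H_2^*$.

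Finally, I would combine these ingredients. By the fundamental theorem of finite abelian groups, write
\[ G \;\cong\; \Z_{n_1} \times \Z_{n_2} \times \cdots \times \Z_{n_r}. \]
Iterating the product step gives $G^* \cong \Z_{n_1}^* \times \cdots \times \Z_{n_r}^*$, and applying the cyclic case to each factor yields
\[ G^* \;\cong\; \Z_{n_1} \times \cdots \times \Z_{n_r} \;\cong\; G, \]
as required. There is no genuine obstacle here; the only subtle point is that the isomorphism $G \cong G^*$ is not canonical (it depends on a choice of generator and of primitive root of unity in each cyclic factor), but the lemma only asserts the existence of some isomorphism, which is exactly what this argument produces.
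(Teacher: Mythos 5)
Your proof is correct: the reduction via the structure theorem of finite abelian groups to the cyclic case, combined with the isomorphism $(H_1\times H_2)^*\cong H_1^*\times H_2^*$, is the standard argument. The paper itself offers no proof (the lemma is stated as well known), so there is nothing to compare against; the one implicit hypothesis you rightly rely on is that $G$ is finite --- which the paper's context guarantees, and which is needed both for the structure theorem and for your claim that character values are roots of unity.
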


Just as we can look for subsets of $G$ that satisfy the conditions of being a PDS, we can look for subsets of $G^*$ that are PDSs. The following result (due to Bridges and Mena~\cite{Bridges_Mena_1982} and Delsarte~\cite{Delsarte_1973}) identifies a subset of $G^*$ that will be a PDS.

\begin{thm}
\label{delsarte}
Let $S$ be a PDS in an abelian group $G$ and fix a nontrivial eigenvalue $\theta$ of $S$.
Then, $S^* = \{\chi \in G^*: \chi(S)=\theta\}$ is a PDS in $G^*$.
\end{thm}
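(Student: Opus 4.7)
The plan is to verify the PDS identity for $S^*$ in $\ZZ[G^*]$ by a character computation, exploiting Pontryagin duality to move back and forth between $G$ and $G^*$. By Lemma \ref{dualgroup}, $G^*$ is itself a finite abelian group, and its characters are canonically the evaluation maps $\hat g : \chi \mapsto \chi(g)$ for $g \in G$. Hence to check the desired identity $(S^*)^2 = k^* + \lambda^* S^* + \mu^*(G^* - S^* - 1)$ in $\ZZ[G^*]$, it suffices (by Fourier inversion on $G^*$) to verify it after applying each $\hat g$.

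The heart of the argument is to compute $\alpha(g) := \hat g(S^*) = \sum_{\chi(S)=\theta_1}\chi(g)$ (taking $\theta = \theta_1$ without loss of generality) and to show that it takes only three distinct values. Let $\beta(g) = \sum_{\chi(S)=\theta_2}\chi(g)$. The first character orthogonality relation and an order-swap of summation give
$$\sum_{\chi \in G^*}\chi(g) \;=\; v\,[g=1], \qquad \sum_{\chi \in G^*}\chi(S)\chi(g) \;=\; v\,[g \in S],$$
where the second identity uses $S = S^{(-1)}$. Peeling off the trivial character (which contributes $1$ and $k$, respectively) yields a $2\times 2$ linear system for $\alpha(g),\beta(g)$; since $\theta_1 \neq \theta_2$, this system uniquely determines $\alpha(g)$ as an explicit function of the two indicator variables $[g=1]$ and $[g\in S]$. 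Consequently $\alpha(g)$ is constant on each of the three sets $\{1\}$, $S$, and $G\setminus(S\cup\{1\})$, taking the values $m_1$ (the size of $S^*$), some integer $\alpha_1$, and some integer $\alpha_2$, respectively.

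To close the argument, I would substitute these three values into the equation
$$\alpha(g)^2 \;=\; k^* + \lambda^*\alpha(g) + \mu^*\bigl(v\,[g=1] - \alpha(g) - 1\bigr),$$
which is $\hat g$ applied to the putative PDS identity. Taking $k^* = |S^*| = m_1$ reduces this to two equations in $\lambda^*,\mu^*$, solved by $\lambda^* - \mu^* = \alpha_1 + \alpha_2$, with $\mu^*$ pinned down by back-substitution. Fourier inversion on $G^*$ then promotes these character-level identities to the full equation in $\ZZ[G^*]$. The remaining checks — that $S^*$ is inverse-closed and omits the identity character — are immediate: $\chi^{-1}(S) = \chi(S^{(-1)}) = \chi(S)$, and the trivial character sends $S$ to $k$, which differs from the nontrivial eigenvalue $\theta_1$. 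The main conceptual obstacle is the very last step, namely confirming that the $\lambda^*,\mu^*$ produced are non-negative integers; this follows because $m_1, m_2, \alpha_1, \alpha_2$ are rational algebraic integers (forced by Galois conjugation on the character sums) and the resulting relations match the counting feasibility identity of Lemma \ref{counting condition} applied to the Cayley graph on $G^*$ with connection set $S^*$.
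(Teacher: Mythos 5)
The paper does not actually prove Theorem~\ref{delsarte}; it cites Bridges--Mena and Delsarte and defers the parameter computation to Ma's survey. Your argument is exactly the standard duality proof found in those sources: identify the characters of $G^*$ with the evaluation maps $\hat g$, use orthogonality to show that $\alpha(g)=\hat g(S^*)$ depends only on which of $\{1\}$, $S$, $G\setminus(S\cup\{1\})$ contains $g$ (this step tacitly uses that every nontrivial $\chi$ satisfies $\chi(S)\in\{\theta_1,\theta_2\}$, which follows by applying $\chi$ to the PDS equation and should be said explicitly, since it is what makes your $2\times 2$ system closed), and then read off $\lambda^*,\mu^*$ from the resulting quadratic. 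The inverse-closure and identity-avoidance checks are fine, and integrality/non-negativity of $\lambda^*,\mu^*$ is actually immediate once the group-ring identity is established, since the coefficients of $(S^*)^2$ count representations; the Galois-conjugation remark is not needed for that.

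The one genuine weak point is your final consistency step. Choosing $\lambda^*,\mu^*$ from the two equations at $g\in S$ and $g\notin S\cup\{1\}$ leaves a third equation at $g=1$, namely $m_1^2=m_1+\lambda^*m_1+\mu^*(v-m_1-1)$, that must still be verified. You justify it by ``applying'' Lemma~\ref{counting condition} to $\Cay(G^*,S^*)$, but that lemma is a necessary condition for a graph already known to be strongly regular, which is precisely what you are trying to prove --- as written the appeal is circular. The gap is easily repaired: sum your pointwise identity over all $g\in G$ and use $\sum_{g}\alpha(g)=v\,[\,1\in S^*\,]=0$ together with $\sum_g\alpha(g)^2=v\,|\{(\chi,\psi)\in S^*\times S^*:\chi\psi=1\}|=v m_1$ (inverse-closure of $S^*$) to see that the total defect vanishes; since the defect is zero at every $g\neq 1$, it is zero at $g=1$ as well. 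Alternatively one can substitute the explicit values $m_1=\frac{-k-\theta_2(v-1)}{\sqrt\Delta}$, $\alpha_1=\frac{v-k+\theta_2}{\sqrt\Delta}$, $\alpha_2=\frac{-k+\theta_2}{\sqrt\Delta}$ and check the identity directly --- this is the ``messy'' computation the paper alludes to. With that repair your proof is complete and coincides with the route the paper delegates to the literature.
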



The parameters of the PDS from Theorem \ref{delsarte} are a bit messy; we refer the interested reader to \cite[Theorem 3.4]{Ma_1994b} for full details. We do note, however, that $|S^*|$ in Theorem~\ref{delsarte} is the multiplicity of the eigenvalue $\theta$. We also know from Lemma~\ref{dualgroup} that $|G^*| = |G|$. Thus, we can consider the feasibility conditions from Section~\ref{SRG} to determine whether it is possible to have a PDS with those two parameters. If not, then it will be impossible to construct a PDS $S$ in the abelian group $G$. 
%
Another related result that can be used to rule out abelian PDSs in some parameters where a nonabelian PDS might still exist is due to Ma~\cite{Ma_1994a}.   

\begin{thm}
\label{discriminant nonexistence}
If $S$ is a PDS in an abelian group $G$ and \[S^*:= \left\{\chi \in G^*: \chi(S) = \frac{(\lambda-\mu)+\sqrt{\Delta}}{2}\right\},\] then $\sqrt{\Delta^*}= v/\sqrt{\Delta}.$
In particular, if $S$ is a Type II PDS in an abelian group then $v/\sqrt{\Delta}$ is an integer.
\end{thm}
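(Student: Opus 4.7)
The plan is to realize $\sqrt{\Delta^*}$ as the difference of the two nontrivial character sums of $S^*$ and to compute these sums by Fourier inversion on $G$. By Theorem~\ref{delsarte}, $S^*$ is a PDS in $G^*$, so by the reasoning applied to $S$ itself (i.e., the quadratic $F(x)$ derivation given earlier in the excerpt), the nontrivial characters of $G^*$ assign $S^*$ exactly two values, which are the eigenvalues $\theta_1^*, \theta_2^*$ with $\sqrt{\Delta^*} = \theta_1^* - \theta_2^*$. Under the canonical isomorphism $G^{**} \cong G$ from Lemma~\ref{dualgroup}, every character $\psi$ of $G^*$ is an evaluation map $\hat{g}\colon \chi \mapsto \chi(g)$ for a unique $g \in G$, so $\psi(S^*) = \sum_{\chi \in S^*} \chi(g)$.

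To compute this sum, I would start from the Fourier-inversion identity
\[
\sum_{\chi \in G^*} \chi(S)\,\chi(g^{-1}) \;=\; v \cdot \mathbf{1}_S(g),
\]
which is just the orthogonality relations applied to the indicator function of $S$. I partition $G^*$ as $\{\chi_0\} \cup S^* \cup (G^* \setminus (S^* \cup \{\chi_0\}))$, on which $\chi(S)$ takes the values $k$, $\theta_1$, $\theta_2$ respectively. Combining this with $\sum_{\chi \neq \chi_0}\chi(g^{-1}) = -1$ for $g \neq 1$ lets me solve for $A := \sum_{\chi \in S^*}\chi(g^{-1}) = \hat{g^{-1}}(S^*)$ in closed form, obtaining
\[
A \;=\; \frac{v\,\mathbf{1}_S(g) \;-\; k \;+\; \theta_2}{\theta_1 - \theta_2}.
\]
As $g$ ranges over $G \setminus \{1\}$ this takes exactly two values, namely $(v-k+\theta_2)/(\theta_1-\theta_2)$ when $g \in S$ and $(\theta_2-k)/(\theta_1-\theta_2)$ otherwise. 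Identifying these with $\theta_1^*$ and $\theta_2^*$ and subtracting gives
\[
\sqrt{\Delta^*} \;=\; \theta_1^* - \theta_2^* \;=\; \frac{v}{\theta_1 - \theta_2} \;=\; \frac{v}{\sqrt{\Delta}}.
\]

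For the Type II addendum, $\sqrt{\Delta}$ is already a positive integer, so $v/\sqrt{\Delta}$ is rational. But $\Delta^* = (\lambda^*-\mu^*)^2 + 4(k^*-\mu^*)$ is a nonnegative integer (the parameters of the PDS $S^*$ are integers), and a rational square root of an integer is an integer, so $v/\sqrt{\Delta}$ is an integer.

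The steps above are short, and the main obstacle is bookkeeping: keeping the duality $G^{**}\cong G$ precise, choosing a consistent ordering convention so that the positive root $\sqrt{\Delta^*}$ is assigned to the correct case ($g \in S$ versus $g \notin S \cup \{1\}$), and verifying that the values produced by the Fourier identity really are the two nontrivial character sums of $S^*$ rather than some degenerate collapse (which would force $v = 0$, and hence does not occur).
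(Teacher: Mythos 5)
The paper does not actually prove Theorem~\ref{discriminant nonexistence}: it is stated as a quoted result of Ma \cite{Ma_1994a} with no in-text argument, so there is nothing internal to compare your proof against. That said, your argument is correct and self-contained, and it is the natural duality computation one would expect behind Ma's statement. Fourier inversion $\sum_{\chi\in G^*}\chi(S)\chi(g^{-1})=v\,\mathbf{1}_S(g)$, combined with the partition of $G^*$ according to whether $\chi(S)$ equals $k$, $\theta_1$, or $\theta_2$, correctly yields $\hat{g^{-1}}(S^*)=(v\,\mathbf{1}_S(g)-k+\theta_2)/(\theta_1-\theta_2)$; under the paper's standing nontriviality assumption $0<\mu<k$ (together with $1\notin S$ and $S=S^{(-1)}$), both $S$ and its complement in $G\setminus\{1\}$ are nonempty, so both values are attained, they differ by $v/\sqrt{\Delta}\neq 0$, and by Theorem~\ref{delsarte} they must be the two nontrivial eigenvalues $\theta_1^*,\theta_2^*$ of the dual PDS $S^*$, giving $\sqrt{\Delta^*}=v/\sqrt{\Delta}$. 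Your worry about which value is $\theta_1^*$ is harmless, since only the absolute difference enters $\sqrt{\Delta^*}$. The Type II addendum is also handled correctly: $\Delta^*$ is an integer because the parameters of $S^*$ are integers, and an integer with a rational square root has an integer square root. The only caveat worth recording is that the argument does quietly use the nontriviality hypotheses that the paper assumes globally but does not restate in the theorem.
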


We now apply these criteria for abelian PDSs to three particular sets of parameters. Later in this section, we will show that there are nonabelian PDSs with these parameters, providing new examples of genuinely nonabelian sets of parameters for PDSs.

\begin{cor}
\label{nonexistenceabeliantriangular}
If $n > 4$, then there is no abelian $(\frac{n(n-1)}{2}, 2(n-2), n-2, 4)$ PDS (these are the parameters for $T_n$).
\end{cor}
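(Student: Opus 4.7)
The plan is to invoke Ma's divisibility criterion, Theorem~\ref{discriminant nonexistence}, which says that for any Type II PDS in an abelian group we must have $v/\sqrt{\Delta} \in \Z$. First I would verify that the triangular parameters produce a Type II SRG by computing the discriminant: with $\lambda-\mu = n-6$ and $k-\mu = 2n-8$,
\[
\Delta = (n-6)^2 + 4(2n-8) = n^2 - 4n + 4 = (n-2)^2,
\]
consistent with the eigenvalues $\theta_1 = n-4$, $\theta_2 = -2$ computed for $T_n$ in the example following Theorem~\ref{eigenvalues}. So $\sqrt{\Delta} = n-2$, and $T_n$ is indeed Type II.

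Next, I would apply Theorem~\ref{discriminant nonexistence} to reduce the problem to checking whether
\[
\frac{v}{\sqrt{\Delta}} \;=\; \frac{n(n-1)}{2(n-2)}
\]
can be an integer for $n > 4$. This is a short divisibility argument: writing $n(n-1) = (n-2)(n+1) + 2$ gives
\[
\frac{n(n-1)}{n-2} \;=\; (n+1) + \frac{2}{n-2},
\]
which is an integer only when $(n-2) \mid 2$, i.e. only for $n \in \{3,4\}$. For $n > 4$ the quantity $\frac{n(n-1)}{n-2}$ is therefore not an integer, and dividing further by $2$ cannot produce one either. Hence $v/\sqrt{\Delta} \notin \Z$, and Theorem~\ref{discriminant nonexistence} precludes any abelian PDS with these parameters.

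There is no real obstacle: the proof is essentially one line of arithmetic after invoking Ma's criterion. The only thing to double-check is the boundary behavior at small $n$, namely that $n=3$ and $n=4$ are correctly excluded by the hypothesis $n > 4$ (for $n=3$, $T_3$ is the triangle $K_3$, and for $n=4$, $T_4$ is the octahedron-like graph on six vertices, both of which do admit abelian regular actions, so ruling them out in the hypothesis is genuinely necessary). If one preferred, an alternative route would be to use Theorem~\ref{delsarte} to transfer to a dual PDS on $G^*$ and apply the multiplicity condition (Lemma~\ref{multiplicity}) there, but the direct divisibility test via Theorem~\ref{discriminant nonexistence} is strictly cleaner.
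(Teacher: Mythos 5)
Your proof is correct and follows essentially the same route as the paper: verify that $\Delta = (n-2)^2$ is a perfect square, invoke Theorem~\ref{discriminant nonexistence}, and show that $v/\sqrt{\Delta} = \frac{n(n-1)}{2(n-2)}$ is not an integer for $n>4$. Your divisibility argument via $n(n-1) = (n-2)(n+1)+2$ is in fact slightly more careful than the paper's displayed expansion (which has a small typo, $\tfrac{2}{n-2}$ where $\tfrac{1}{n-2}$ is meant), but the substance is identical.
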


\bproof
The parameters of $T_n$ have $v=\frac{n(n-1)}{2}$ and $\Delta = (n-2)^2.$
If $n>4$ then 
\[ \frac{v}{\sqrt{\Delta}} = \frac{n(n-1)}{2(n-2)}=\frac12n+\frac12+\frac{2}{n-2}.\]
which is not an integer if $n>4.$  
\eproof

\begin{cor}
 \label{cor:godsil}
 Let $q$ be a prime power and $r < q+1$ be an integer dividing $q+1$.  A PDS with parameters
 \[\left(q^3, (q-1)\left( \frac{(q+1)^2}{r} - q\right), r \left( \frac{q+1}{r} - 1\right)^3 + r - 3, \left( \frac{q+1}{r} - 1\right)\left( \frac{(q+1)^2}{r} - q\right) \right)\]
 is genuinely nonabelian.
\end{cor}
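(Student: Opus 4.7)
The corollary has two halves: the existence of a nonabelian PDS with these parameters, and the nonexistence of an abelian one. The existence is precisely the content of Theorem~\ref{thm:godsil} (Godsil's Krein covers), so the proposal concentrates on the nonexistence. My tool is Ma's discriminant criterion, Theorem~\ref{discriminant nonexistence}: for an abelian Type II PDS, $v/\sqrt{\Delta}$ must be an integer. The plan is to compute $\sqrt{\Delta}$ explicitly for these parameters and exhibit a divisibility obstruction.

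The main calculation is cleanest after a change of variable. Set $s := (q+1)/r - 1$, so that $r(s+1) = q+1$ and the hypothesis $r < q+1$ becomes $s \geq 1$. A short rearrangement shows
\[
k = (q-1)\bigl(s(q+1)+1\bigr), \qquad \mu = s\bigl(s(q+1)+1\bigr),
\]
while, using $s^3+1 = (s+1)(s^2-s+1)$ together with $r(s+1)=q+1$, the expression for $\lambda$ collapses to $\lambda = (q+1)(s^2-s+1) - 3$, whence $\lambda-\mu = (q-2) - s(q+2)$. Substituting these into $\Delta = (\lambda-\mu)^2 + 4(k-\mu)$ and collecting powers of $q$, the constant and linear coefficients cancel while the $q^2$ coefficient is $(s+1)^2$. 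Therefore $\Delta = q^2(s+1)^2$ and $\sqrt{\Delta} = q(s+1) = q(q+1)/r$.

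With $\sqrt{\Delta}$ in hand the rest is immediate:
\[
\frac{v}{\sqrt{\Delta}} \;=\; \frac{q^3 \, r}{q(q+1)} \;=\; \frac{q^2 r}{q+1},
\]
and since $\gcd(q^2, q+1) = 1$ this is an integer only if $(q+1) \mid r$, which is forbidden by $1 \leq r < q+1$. Ma's theorem then excludes any abelian PDS with these parameters, and combined with Theorem~\ref{thm:godsil} this proves the corollary. I expect the main obstacle to be the algebraic bookkeeping verifying $\Delta = q^2(s+1)^2$; the change of variable to $s$ is what makes that tractable, and a quick sanity check of the first feasibility condition $k(k-\lambda-1)=(v-k-1)\mu$ (another polynomial identity in $q$ and $s$) serves as a useful consistency test.
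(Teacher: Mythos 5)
Your proposal is correct and follows essentially the same route as the paper: compute $\sqrt{\Delta} = q(q+1)/r$ and invoke Theorem~\ref{discriminant nonexistence} to rule out an abelian PDS, since $v/\sqrt{\Delta} = q^2 r/(q+1)$ cannot be an integer when $1 \leq r < q+1$. The only difference is that you supply the algebraic verification of $\Delta = q^2\left(\frac{q+1}{r}\right)^2$ via the substitution $s = (q+1)/r - 1$, which the paper simply asserts.
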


\begin{proof}
 In this case, $\sqrt{\Delta} = q(q+1)/r$.  Since $(q+1)/r > 1$, $\sqrt{\Delta}$ does not divide $v = q^3$.  The result follows from Theorem \ref{discriminant nonexistence}.
\end{proof}

There are known to be SRGs with the parameters listed in Corollary \ref{cor:godsil} by the work of Godsil \cite[5.3 Lemma]{Godsil_1992}. We next highlight one specific instance of Corollary \ref{cor:godsil}; as we will see later in this sections, PDSs are known to exist for these parameters. 

\begin{cor}
\label{abelianGQ}
Let $S$ be a PDS with parameters $(q^3, q^2+q-2, q-2, q+2)$.  If $q$ is odd then $S$ cannot be an abelian PDS.
\end{cor}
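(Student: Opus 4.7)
The plan is to invoke Theorem \ref{discriminant nonexistence} directly, so the whole task reduces to a short arithmetic check on the discriminant $\Delta$.

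First I would compute $\Delta = (\lambda-\mu)^2 + 4(k-\mu)$ for $(v,k,\lambda,\mu) = (q^3, q^2+q-2, q-2, q+2)$. Since $\lambda - \mu = -4$ and $k - \mu = q^2 - 4$, this gives
\[
\Delta = 16 + 4(q^2 - 4) = 4q^2,
\]
so $\sqrt{\Delta} = 2q$ is an integer and $S$ is necessarily a Type II PDS (so the hypothesis of Theorem \ref{discriminant nonexistence} applies).

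Next I would form the ratio that Theorem \ref{discriminant nonexistence} forces to be an integer in the abelian case:
\[
\frac{v}{\sqrt{\Delta}} = \frac{q^3}{2q} = \frac{q^2}{2}.
\]
If $q$ is odd, then $q^2$ is odd and $q^2/2$ is not an integer, contradicting Theorem \ref{discriminant nonexistence}. Hence no abelian PDS with these parameters can exist.

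The argument is essentially a one-line numerical check, so there is no real obstacle; the only thing worth double-checking is that the parameters do form a feasible SRG parameter set (which follows from the identity $k(k-\lambda-1) = (q^2+q-2)(q^2-1) = (q-1)^2(q+1)(q+2) = (v-k-1)\mu$) and that $\Delta$ is a perfect square, guaranteeing Theorem \ref{discriminant nonexistence} rather than Lemma \ref{conference graph} is the relevant obstruction.
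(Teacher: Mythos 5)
Your proof is correct and is essentially the paper's argument: the paper simply cites Corollary \ref{cor:godsil} with $r=(q+1)/2$, whose proof is exactly the same application of Theorem \ref{discriminant nonexistence} with $\sqrt{\Delta}=q(q+1)/r=2q$ and $v/\sqrt{\Delta}=q^2/2$ not an integer for $q$ odd. Your direct computation of $\Delta=4q^2$ just unwinds that intermediate corollary.
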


\bproof
This follows from setting $r = (q+1)/2$ in Corollary \ref{cor:godsil}.
\eproof






As our first example of a genuinely nonabelian set of parameters, the following construction of a PDS in a nonabelian group occurs in a set of parameters that Corollary~\ref{nonexistenceabeliantriangular} shows cannot have an abelian PDS. This PDS construction is new.

\begin{thm}
\label{mainconstruction}
Let $q = p^r$ be a prime power congruent to $3$ mod $4$.
Then we may represent $T_q$ as a Cayley graph in the semidirect product $C_p^r \rtimes C_t$ where $t = (q-1)/2.$
\end{thm}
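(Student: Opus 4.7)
The plan is to realize $T_q$ as a Cayley graph for the ``affine-type'' group $G := (\mathbb{F}_q,+) \rtimes Q$, where $Q \leq \mathbb{F}_q^{\times}$ denotes the subgroup of nonzero squares. Identification with $C_p^r \rtimes C_t$ is essentially automatic: since $q = p^r$, the additive group of $\mathbb{F}_q$ is elementary abelian of rank $r$ over $\mathbb{F}_p$ and so is isomorphic to $C_p^r$; and $Q$, as an index-$2$ subgroup of the cyclic group $\mathbb{F}_q^{\times}$, is cyclic of order $(q-1)/2 = t$. The semidirect product structure is given by the natural multiplicative action of $Q$ on $(\mathbb{F}_q, +)$.

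The crucial step is to let $G$ act on the collection $\binom{\mathbb{F}_q}{2}$ of two-element subsets of $\mathbb{F}_q$ via the affine maps $x \mapsto ax + b$ (with $a \in Q$, $b \in \mathbb{F}_q$), and to verify that this action is \emph{regular}. Since $|G| = q(q-1)/2 = \binom{q}{2}$ equals the number of vertices of $T_q$, regularity reduces to showing that for any $\{c,d\} \in \binom{\mathbb{F}_q}{2}$ there exists a \emph{unique} $(a,b) \in Q \times \mathbb{F}_q$ with $\{b,\, a+b\} = \{c,d\}$. Sending $0 \mapsto c,\ 1 \mapsto d$ forces $a = d - c$, whereas $0 \mapsto d,\ 1 \mapsto c$ forces $a = -(d-c)$. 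This is precisely where the hypothesis $q \equiv 3 \pmod{4}$ enters: $-1$ is a nonsquare, so $Q$ and $-Q$ partition $\mathbb{F}_q^{\times}$, and for the nonzero element $d-c$ exactly one of $\pm(d-c)$ belongs to $Q$. This single arithmetic fact simultaneously delivers both transitivity and uniqueness, i.e., regularity.

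Finally, because $G$ acts on $\mathbb{F}_q$, its induced action on $2$-subsets preserves the relation ``share exactly one element,'' so $G$ acts as automorphisms of $T_q$. Choosing $v_0 = \{0,1\}$ and $S = \{g \in G : g \cdot v_0 \in N(v_0)\}$, the discussion preceding Lemma~\ref{groupringPDS} then yields $T_q \cong \Cay(G, S)$ with $|S| = 2(q-2)$. The main obstacle in the argument is confirming that this particular index-$2$ subgroup of the full affine group acts regularly on $\binom{\mathbb{F}_q}{2}$, but this is handled cleanly by the observation that $-1 \notin Q$; in the complementary regime $q \equiv 1 \pmod{4}$, for any pair $\{c,d\}$ either both or neither of $\pm(d-c)$ is a square, so this subgroup fails to act either transitively or sharply on two-subsets, and the construction genuinely requires the congruence condition on $q$.
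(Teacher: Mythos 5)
Your proof is correct and is essentially the paper's argument: both realize the vertices of $T_q$ as $2$-subsets of the field acted on by the group generated by translations and multiplications by nonzero squares, with regularity hinging on the fact that $-1$ is a nonsquare when $q \equiv 3 \pmod 4$. If anything, your version is slightly cleaner, since you work in $\mathbb{F}_q$ uniformly (the paper writes out only the case $r=1$ and says the general case is similar) and your preimage count gives transitivity and triviality of the stabilizer in one stroke, where the paper instead describes the orbit of $\{0,1\}$ as a union of two halves $S_+\cup S_-$ that are disjoint precisely because $-1$ is a nonsquare.
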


\begin{proof}
We prove this for $r=1$: the general case is similar.
Let $V$ be the set of pairs (sets of size 2) from $\ZZ_p$.
Define $\sigma : V \rightarrow V$ by $\sigma(\{a,b\}):= \{a+1,b+1\}$ where addition is done in $\ZZ_p$.
The function $\sigma$ is a permutation of $V$ of order $p$.

Let $g$ be a primitive root of $p$ and define $m:=g^2 \in \ZZ_p^*.$ 
The element $m$ generates a (multiplicative) subgroup,  the quadratic residues of $p$, of order $t$, index 2, in $\ZZ_p^*.$
Define $\tau: V \rightarrow V$ by $\tau(\{a,b\}):= \{ma,mb\}$ where multiplication is done in $\ZZ_p$.
The function $\tau$ is  a permutation of $V$ of order $t = (p-1)/2$.

We compute $\tau\sigma\tau^{-1}$:
\begin{align*} \tau\sigma\tau^{-1}(\{a,b\}) &= \tau\sigma(\{m^{-1}a,m^{-1}b\}) = \tau(\{m^{-1}a+1,m^{-1}b+1\})\\ 
 &= \{a+m,b+m\} = \sigma^m(\{a,b\}). 
\end{align*}

Therefore, 
\[ \tau\sigma\tau^{-1}=\sigma^m\]
Thus $G:=\langle \sigma, \tau \rangle \cong C_p\rtimes_m C_t$ is a nonabelian group of order $pt$ and all its elements may be described by $\sigma^i\tau^j$ as $i$ ranges through $\{0,1,2..,p-1\}$ and $j$ ranges through $\{0,1,2..,t-1\}.$

What is the orbit of $\{0,1\}$ under the group $G$? 
The powers of $\tau$ map $\{0,1\}$ to $\{0, jm\}$ for $j \in \{1,2,3...,t-1\}.$
The powers of $\sigma$ then map these elements to $\{a, a+jm\}$ for $a \in \{0,1,2, ..., p-1\}.$
The set 
\[S_+ := \left\{\{a, a+jm\}: a \in \{0,1,2, ..., p-1\}, j \in \{1,2,3...,t-1\} \right\}\]
is half of the vertices of $T_p$ and is a subset of the orbit of $\{0,1\}.$
Now  $\sigma^{-1}(\{0,1\})=\{0,-1\}$, and the orbit of $\{0,-1\}$ includes
\[ S_- := \left\{\{a, a-jm\}: a \in \{0,1,2, ..., p-1\}, j \in \{1,2,3...,t-1\} \right\}.\]
If $-1$ is a quadratic residue of $p$ then $-1 =mj$ for some integer $j$ and these two sets are the same.
But since $-1$ is {\em not} a quadratic residue we have that $S_+$ and $S_-$ are disjoint and $S_+ \cup S_- = T_p$.
In this case, $G=\langle \sigma,\tau \rangle$ is transitive on the vertices of $T_p$.
Since $G$ has the same order as $T_p$ and is transitive on $T_p$ then $G$ acts regularly on the graph $T_p.$
Thus we have constructed a PDS in $G$.
\end{proof}

\begin{rem}
We can explicitly write out a PDS when $G \cong C_p \rtimes_m C_{\frac{p-1}{2}}$.
Set 
\[ N:= \{ \{0,a\}: a=2,...,p-1\} \cup  \{ \{1,b\}: b=2,...,p-1\},\]
the neighborhood of $\{0,1\}$, and define
\[ S := \{ g \in G: g(\{0,1\})\in N\}.\]
We observe that since $\{1,2\} \in N$ and $\{0,-1\}\in N$ then $\{\sigma, \sigma^{-1}\}\subseteq S.$
Set $T := \{\tau^j: j \in \{1,2,...,p-1\}\}.$
Since $\{0,mj\} \in N$ for all $j$ then $T \subseteq S.$
Similarly, since $\{0,-mj\} \in N$ for all $j$ then $T\sigma^{-1} \subseteq S.$
The remaining elements of $S$ can be found by considering elements of the form $\{1,1+mj\}$ and $\{1,1-mj\}$ as $mj$ varies across the quadratic residues of $p$.
As the sets $\{1+mj:  j \in \{1,2,...,p-1\}\}$ and $\{1-mj:  j \in \{1,2,...,p-1\}\}$ partition the nonzero elements of $\ZZ_p$ and are a subset of $N$ then $\sigma T, \sigma T \sigma^{-1} \subseteq S.$
Thus,
\[ S = \{\sigma, \sigma^{-1}\} \cup  T \cup T\sigma^{-1} \cup \sigma T\cup \sigma T\sigma^{-1}.\]
(Note that the sets $T$ and $\sigma T \sigma^{-1}$ must be disjoint, and this is possible {\em only} if $G$ is nonabelian!)
If we wish to write all the elements of $G$ in the form $\sigma^i\tau^j$ then 
\[ S= \{\sigma, \sigma^{-1}\} \cup  T \cup \sigma^{-m}T \cup \sigma T\cup \sigma^{-m+1} T .\]

We further note that the requirement that $q$ is $3 \bmod{4}$ is necessary in Theorem~\ref{mainconstruction}. For example, suppose $p=5$.  Here $\tau\sigma\tau^{-1} = \sigma^4$ and $m=4$. Then
\[ \sigma= (01\ 12\ 23\ 34\ 04)(02\ 13\ 24\ 03\ 14) \text{ and } \tau = (01\ 04)(02\ 03)(12\ 34)(13\ 24)(14)(23).\]
The orbit of 01 is $\{01, 12, 23, 34, 04\}$ and the orbit of 02 is $\{02, 13, 24, 03, 14\}$.
In this case there is no PDS since these permutations are not transitive on the graph.

We also note that exploration in GAP \cite{GAP4} using the Algebraic Graph Theory package \cite{AGT} shows that $T_n$ is {\em not} a Cayley graph if $n \in \{5,6,8,9,10,12,13,14,15\}.$  
(The integer 15 required considerable work.)
One might suspect that the only time $T_n$ has a regular automorphism group is when $p$ is a prime power congruent to $3$ mod $4$.
\end{rem}


\begin{example}
As an example of Theorem~\ref{mainconstruction}, we consider $T_7$.
The permutation $\sigma$ in the proof has cycle structure
\[ \sigma= (01\ 12\ 23\ 34\ 45\ 56\ 06)(02\ 13\ 24\ 35\ 46\ 05\ 16)(03\ 14\ 25\ 36\ 04\ 15\ 26),\]
and the permutation $\tau$ has cycle structure
\[ \tau = (01\ 02\ 04)(03\ 06\ 05)(12\ 24\ 14)(13\ 26\ 45)(15\ 23\ 46)(16\ 25\ 34)(35\ 36\ 56).\]
In this case, a $(21,10,5,4)$ PDS in $\langle \sigma, \tau \rangle \cong C_7 \rtimes_2 C_3$ is 
\[ S = \{\sigma, \sigma^6, \ \tau, \tau^2,     \ \sigma\tau,  \sigma\tau^2,  \ \tau\sigma^6, \tau^2\sigma^6, \ \sigma\tau\sigma^6, \sigma\tau^2\sigma^6\}\]
\[= \{\sigma, \sigma^6,  \tau, \tau^2,  \sigma \tau, \sigma \tau^2, \sigma^5 \tau, \sigma^3 \tau^2, \sigma^6\tau, \sigma^4\tau^2\}.\]

We note that this PDS does not satisfy Ma's Multiplier Theorem (Lemma \ref{multiplierthm}): while $2$ is relatively prime to $|\langle \sigma, \tau \rangle| = 21$, $\sigma \in S$ whereas $\sigma^2 \notin S$. 
\end{example}

A second example of genuinely nonabelian PDSs have parameters $(q^3, q^2+q-2, q-2, q+2)$. We showed in Corollary~\ref{abelianGQ} that no abelian PDSs can have these parameters if $q$ is odd. Two separate constructions (\cite{Kantor_1986}, \cite{Swartz_2015}) provide nonabelian PDSs with these parameters, the first when $q$ is an odd prime power and the associated group is a Heisenberg group over the field of order $q$ (that is, the set of unipotent upper-triangular matrices with entries in ${\rm GF}(q)$), and the second when $q$ is an odd prime and the associated group is an extraspecial group of exponent $q^2$. The parameter set is hence genuinely nonabelian, and any SRG coming from these PDSs will also be genuinely nonabelian. 

The smallest example of an odd $q$ is the SRG with parameters $(27,10,1,5)$ created by setting $q=3.$ Consider the nonabelian Heisenberg group 
%
%
\[ G_1 := (C_3 \times C_3) \rtimes C_3 := \langle x, y, z : x^3 = y^3 = z^3 = xzx^{-1} = yzy^{-1} = 1, xyx^{-1} = yz^{-1} \rangle.\]  
In this nonabelian group,
\[ S_1 = (x + x^2 + xy + xy^2) + (1 +  y + x^2y^2)z + (1 + y^2 +  x^2y)z^2\]
is a $(27,10, 1, 5)$ PDS. 

Next, consider the nonabelian group
\[ G_2 :=C_9 \rtimes_4 C_3 := \langle x,y : x^9=y^3=1, yxy^{-1}=x^4\rangle.\]
In this nonabelian group,
\[S_2:= (x^2+x^3+x^6+x^7)+(x^2+x^3+x^4)y+(x+x^2+x^6)y^2\]
is a $(27,10,1,5)$ PDS.  In particular, the PDS $S_2$ in the group $G_2$ fails Ma's Multiplier Theorem (Lemma \ref{multiplierthm}): the integer $2$ is relatively prime to the order of $G_2$ and $x^2 \in S_2$, but $(x^2)^2 = x^4 \notin S_2$.
  
  While the groups $G_1$ and $G_2$ are not isomorphic, the corresponding SRGs $\Cay(G_1, S_1)$ and $\Cay(G_2, S_2)$ are in fact isomorphic: they are each the complement of the Schl\"afli graph, which was proven to be unique by Seidel \cite{Seidel_1968}.  (On the other hand, the graphs arising from extraspecial groups of order $p^3$ with exponent $p$ and exponent $p^2$ are not isomorphic in general; for example, they are not isomorphic when $p = 5$.)

These PDSs happen to be \textit{pseudo-geometric}: they have the same parameters as a SRG that would be a collinearity graph of a \textit{generalized quadrangle} (GQ).  A GQ of order $(s,t)$ is a point-line incidence geometry such that any two points lie on at most one common line; every line is incident with exactly $s+1$ points; every point is incident with exactly $t+1$ lines; and, given a point $P$ and a line $\ell$ not incident with $P$, there is a unique point on $\ell$ collinear with $P$.  Indeed, a group acting regularly on the set of points of a GQ of order $(s,t)$ would correspond to a $((s+1)(st+1), s(t+1), s-1, t+1)$ PDS.  We refer the interested reader to \cite{Payne_Thas_2009} for more information about GQs.

Groups acting regularly on the set of points of a GQ have produced extremely interesting examples of nonabelian PDSs in recent years.  For example, Bamberg and Giudici noted the existence of a $(4617, 520, 7, 65)$ PDS in a group isomorphic to $C_{513} \rtimes C_9$ in \cite{Bamberg_Giudici_2011} that is genuinely nonabelian.  Furthermore, Bamberg and Giudici noted that other groups (beyond just Heisenberg groups) can act regularly on the set of points of a Payne-derived GQ of ${\rm W}(q)$ of order $(q-1, q+1)$ for $q$ odd (which would correspond to a $(q^3, q^2+q-2, q-2, q+2)$ PDS).  In fact, Feng and Li \cite{Feng_Li_2021} classified all groups acting regularly on the Payne-derived GQ of ${\rm W}(q)$, $q$ odd, showing as a byproduct that such groups can actually have \textit{unbounded nilpotency class}.  

We end this section by proving that, for every choice of $r < q + 1$ in Corollary \ref{cor:godsil}, there is a genuinely nonabelian PDS with those parameters.  As far as we know, other than when $r = (q+1)/2$, these have generally not previously been recognized as feasible parameters for PDSs.

\begin{thm}
 \label{thm:godsil}
 Let $q$ be a prime power and $r < q+1$ be an integer dividing $q+1$.  There exists a genuinely nonabelian PDS with parameters
 \[\left(q^3, (q-1)\left( \frac{(q+1)^2}{r} - q\right), r \left( \frac{q+1}{r} - 1\right)^3 + r - 3, \left( \frac{q+1}{r} - 1\right)\left( \frac{(q+1)^2}{r} - q\right) \right).\]
\end{thm}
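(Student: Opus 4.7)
Since Corollary \ref{cor:godsil} already rules out abelian PDSs with these parameters, the plan reduces to constructing an explicit PDS in a nonabelian group of order $q^3$ and then invoking Lemma \ref{groupringPDS}. Setting $s := (q+1)/r$, the target parameters become
\[\left( q^3,\ (q-1)(sq+s-q),\ r(s-1)^3+r-3,\ (s-1)(sq+s-q) \right),\]
and Godsil's construction in \cite[5.3 Lemma]{Godsil_1992} provides SRGs with these parameters as antipodal $r$-fold covers of a complete graph, built using the norm-one subgroup of order $q+1$ in $GF(q^2)^*$. The case $s=2$ recovers the $(q^3, q^2+q-2, q-2, q+2)$ GQ parameters from Corollary \ref{abelianGQ}, which is already known to arise from a PDS in the Heisenberg group $H_q$ of order $q^3$, so $H_q$ is the natural target group for general $s$ as well.

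First I would realize $H_q$ as the group of unipotent upper-triangular $3\times 3$ matrices over $GF(q)$, with center $Z \cong (GF(q),+)$ and quotient $H_q/Z \cong (GF(q),+)^2$. Identifying $H_q/Z$ with the additive group of $GF(q^2)$, the cyclic subgroup $C_r$ of the norm-one subgroup $C_{q+1} \le GF(q^2)^*$ acts on $H_q/Z$ by multiplication, and one can check that this action lifts to an automorphism of $H_q$ fixing $Z$ pointwise. I would then define $S \subseteq H_q$ as a union of $C_r$-orbits on $H_q \setminus \{1\}$ dictated by the combinatorial structure of Godsil's cover (together with a specified subset of $Z\setminus\{1\}$), and verify by a direct count that $|S| = (q-1)(sq+s-q)$.

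The heart of the argument is verifying the group-ring identity
\[ S^2 = k\cdot 1 + \lambda S + \mu(H_q - S - 1) \]
in $\ZZ[H_q]$. Because $H_q$ has $q^2$ linear characters (trivial on $Z$) and $q-1$ irreducible characters of dimension $q$ (nontrivial on $Z$), this reduces to two families of character-sum identities: the linear-character sums collapse to Gauss-sum-style computations on $(GF(q),+)^2$ against the $C_r$-action and should be routine; the $q$-dimensional character sums require using the Stone--von Neumann description of these representations and checking that $\mathrm{tr}(\chi(S)\chi(S)^*)$ evaluates to the correct combination of eigenvalues for each nontrivial central character $\chi$. I expect the latter to be the main obstacle, as it is precisely where Godsil's geometric cover construction must be translated into a group-ring identity; the combinatorics of the $C_r$-orbits on the affine plane $H_q/Z$ must be organized so that each nontrivial central character sees the prescribed eigenvalue pattern. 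Once this identity is established, Lemma \ref{groupringPDS} realizes $\Cay(H_q, S)$ as Godsil's SRG, and Corollary \ref{cor:godsil} delivers the genuinely nonabelian conclusion.
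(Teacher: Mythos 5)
Your reduction of the ``genuinely nonabelian'' claim to Corollary \ref{cor:godsil} matches the paper, and you correctly identify that what remains is to produce a PDS in a suitable group of order $q^3$. But the core of your proposal --- defining $S$ and verifying $S^2 = k + \lambda S + \mu(G - S - 1)$ in $\ZZ[H_q]$ via linear characters plus the $q$-dimensional Stone--von Neumann representations --- is never carried out: you describe $S$ only as ``a union of $C_r$-orbits \ldots dictated by the combinatorial structure of Godsil's cover (together with a specified subset of $Z\setminus\{1\}$),'' and you explicitly flag the nonabelian character computation as ``the main obstacle.'' That obstacle is the entire content of the theorem, so as written this is a plan, not a proof. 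There is also a circularity lurking in the plan: to read off which orbits of $H_q\setminus\{1\}$ belong to $S$ from Godsil's graph, you must already have identified the vertex set of that graph with $H_q$ compatibly with the adjacency relation --- i.e., you must already have a regular action of $H_q$ on the graph. But once you have a regular action on a known SRG, Lemma \ref{groupringPDS} gives the PDS immediately and the group-ring verification is superfluous; and without the regular action, your recipe for $S$ is not defined. (Two smaller points: the cyclic group acting in Godsil's construction has order $(q+1)/r$, not $r$, and it acts on the lines of the generalized quadrangle, not on $H_q/Z$ by field multiplication; and the relevant group of order $q^3$ is a Sylow $p$-subgroup of ${\rm PSU}(3,q)$, which the paper only identifies with the Heisenberg group when $q$ is an odd prime.)

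The paper's proof takes exactly the route that your circularity observation suggests it should: it never touches the group-ring identity. It builds Godsil's SRG inside the Hermitian generalized quadrangle ${\rm H}(3,q^2)$ with ovoid $\calO$ cut out by a non-tangent hyperplane, writes down explicit unitary matrices $t_{\alpha,\beta}$ forming a group $G$ of order $q^3$ that fixes a point $P_0\in\calO$ and acts regularly on $\calO\setminus\{P_0\}$, checks that $G$ commutes with the order-$(q+1)/r$ element $g$ defining the cover (so $G$ permutes the $\langle g\rangle$-orbits of lines that form the vertex set), and uses the coprimality of $r$ and $q^3$ to find a $G$-fixed distinguished orbit $\langle g\rangle\ell_0$. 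Regularity of $G$ on the vertices then follows from regularity on $\calO\setminus\{P_0\}$, and the PDS exists by the general correspondence between regular actions on SRGs and PDSs. If you want to salvage your approach, the fix is to replace the character-theoretic verification with this geometric exhibition of a regular subgroup; the explicit matrices $t_{\alpha,\beta}$ and the automorphism $g$ are the missing ingredients.
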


\begin{proof}
 As noted above, SRGs with these parameters do exist: for example, by \cite[5.3 Lemma]{Godsil_1992}, since $q$ is a prime power and $r$ divides $q + 1$, such a graph can be constructed from a GQ of order $(q^2, q)$ with a particularly nice \textit{ovoid}, that is, a collection of $q^3 + 1$ points in the GQ that are pairwise noncollinear.  We again refer the reader to \cite{Payne_Thas_2009} for information and terminology related to GQs.  Start with the classical GQ ${\rm H}(3,q^2)$ of order $(q^2, q)$, and find an automorphism $g$ of order $(q+1)/r$ fixing each point in an ovoid (but fixing no line of the GQ). Take one point $P_0$ in the ovoid, and fix one distinguished line $\ell_0$ incident with $P_0$.  The vertices of our SRG will be the $\langle g \rangle$-orbits of lines ${\langle g \rangle}\ell$ such that $\ell$ is not incident with $P_0$ but $\ell_0$ is concurrent with some line in $\langle g \rangle \ell$, and two orbits $\langle g \rangle \ell_1$ and $\langle g \rangle \ell_2$ are adjacent in the SRG if and only $\ell_1$ is collinear with some line in $\langle g \rangle \ell_2$.  (We note that the roles of points and lines are often reversed in this construction, and so typically the construction is done in the dual GQ ${\rm Q}^-(5,q)$ of order $(q, q^2)$; we have chosen to work instead in ${\rm H}(3,q^2)$ for the nice representation we have of an ovoid.)     
 
 We now explicitly construct the GQ and ovoid.  This construction is essentially the same as that of \cite[6.1 Lemma]{Godsil_1992}, except we are choosing a different nonsingular Hermitian form so that our group $G$ containing the PDS has a nice representation.  Let $V = \GF(q^2)^4$, and, given two vectors $x, y \in V$, define a nonsingular Hermitian form $b: V \times V \to \GF(q^2)$ on $V$ by
 \[ b(x, y) := x_1 y_1^q + x_2 y_4^q + x_3 y_3^q + x_4 y_2^q.\]  The $1$-dimensional totally isotropic subspaces of $V$ -- that is, the $1$-dimensional subspaces $\langle x \rangle$ spanned by the (nonzero) vectors $x$ such that $b(x,x) = 0$ -- form the point set of a GQ $\calH$ of order $(q^2, q)$ in $\PG(3,q^2)$.  The lines of $\calH$ are the $2$-dimensional totally isotropic subspaces -- that is, those $2$-dimensional subspaces $U$ such that $b(x,y) = 0$ for all $x,y \in U$ -- with incidence between points and lines defined by subspace containment.  
 
 To construct an ovoid, we note that the intersection of $\calH$ with any non-tangent hyperplane is an ovoid; see \cite{Godsil_1992} and \cite{Payne_Thas_2009}.  For our choice of Hermitian form, we may choose $x_1 = 0$ as our non-tangent hyperplane. Thus, our ovoid is 
 \[ \calO := \left\{ \langle x \rangle : x_1 = 0, \, x_2 x_4^q + x_3^{q+1} + x_4 x_2^q = 0\right\}.\]
 Moreover, by \cite[p. 249]{Dixon_Mortimer_1996}, we see in fact that
 \[ \calO = \left\{ \langle (0,1,0,0)^t \rangle \right\} \cup \left\{ \langle (0,\alpha, \beta, 1)^t \rangle: \alpha + \alpha^q + \beta^{q+1} = 0, \alpha, \beta \in \GF(q^2)\right\},\]
and $|\calO| = q^3 + 1$.

The element $g$ of order $(q+1)/r$ fixing each point in $\calO$ but fixing no line of $\calH$ is constructed exactly as in \cite[6.1 Lemma]{Godsil_1992}: if $\gamma$ is an element of $\GF(q^2)$ with multiplicative order $(q+1)/r$, then we define
\[ g := \begin{pmatrix}
         \gamma & 0 & 0 & 0\\
         0      & 1 & 0 & 0\\
         0      & 0 & 1 & 0\\
         0      & 0 & 0 & 1\\
        \end{pmatrix}.
\]

Since $\gamma^{q+1} = 1$, left multiplication by $g$ preserves the bilinear form $b$.  It is also clear that $g$ fixes every element of $\calO$, and, for the same reasons as noted in \cite[6.1 Lemma]{Godsil_1992}, $\langle g \rangle$ acts fixed-point freely on the points of $\calH$ not on the hyperplane $x_1 = 0$, and so $\langle g \rangle$ acts fixed-point freely on the lines of $\calH$ incident with each point in $\calO$.

We now construct a subgroup $G$ of automorphisms of $\calH$ of order $q^3$.  Let $\alpha, \beta \in \GF(q^2)$ be elements satisfying $\alpha + \alpha^q + \beta^{q+1} = 0$; there are exactly $q^3$ such pairs $(\alpha, \beta)$.  Define
\[ t_{\alpha, \beta} := \begin{pmatrix}
                         1 & 0 &        0 &      0\\
                         0 & 1 & -\beta^q & \alpha\\
                         0 & 0 &        1 &  \beta\\
                         0 & 0 &        0 &      1\\
                        \end{pmatrix}.
\]
For the same reasons as noted in \cite[p. 249]{Dixon_Mortimer_1996}, the set
\[ G:= \left\{ t_{\alpha, \beta} : \alpha + \alpha^q + \beta^{q+1} = 0\right\}\]
is a group of order $q^3$ fixing the point $\langle (0,1,0,0)^t \rangle$ of $\calO$ and acting regularly on the remaining $q^3$ elements of $\calO$.

Finally, we claim that the group $G$ acts regularly on the vertices of our associated SRG.  To see this, we choose $P_0 = \langle (0,1,0,0)^t \rangle$.  We note that, for all $\alpha, \beta$, $g t_{\alpha, \beta} = t_{\alpha, \beta} g$, so the group $G$ preserves the $\langle g \rangle$-orbits of lines.  Since the number $r$ of $\langle g \rangle$-orbits of lines incident with $P_0$ is coprime to $|G| = q^3$, at least one $\langle g \rangle$-orbit of lines incident with $P_0$ is fixed by $G$; we choose one such distinguished $\langle g \rangle$-orbit $\langle g \rangle \ell_0$.  Since $G$ fixes $\langle g \rangle \ell_0$ but acts regularly on $\calO \backslash \{ P_0 \}$, $G$ acts regularly on the $\langle g \rangle$-orbits of lines $\ell$ such that $\ell$ is not incident with $P_0$ but $\ell_0$ is concurrent with some line in $\langle g \rangle \ell$; in other words, $G$ acts regularly on the vertices of the associated SRG.  The group $G$ is clearly nonabelian from its definition, but we may also conclude that $G$ is nonabelian from Corollary \ref{cor:godsil}.  This completes the proof. 
\end{proof}

\begin{rem}
\label{rem:godsil}
 For each fixed prime power $q = p^d$, each PDS constructed in Theorem \ref{thm:godsil} occurs in the \textit{same} group $G$ of order $q^3$; indeed, the construction in Theorem \ref{thm:godsil} shows that $G$ is isomorphic to a Sylow $p$-subgroup of the projective special unitary group ${\rm PSU}(3,q)$.  Moreover, examination of the elements $t_{\alpha, \beta}$ shows that $G$ has exponent $4$ when $q$ is even and exponent $p$ when $q$ is odd.  For example, although it is already apparent from the matrix structure of the elements $t_{\alpha, \beta}$, when $q = p$ is an odd prime, this shows that $G$ must be a Heisenberg group of order $p^3$.
\end{rem}

As we will discuss further in the next section, the results in this section suggest that the examination of known combinatorial structures for sharply transitive subgroups may be a fruitful line of inquiry.

\section{Nonabelian Techniques and Future Directions}
\label{sect:nonab}

The world of PDSs in nonabelian groups is relatively unexplored territory, especially when compared to the abelian case.  The main issue at the moment is that powerful techniques in the abelian setting -- such as Ma's Multiplier Theorem (Lemma \ref{multiplierthm}) -- simply do not hold when the group is nonabelian.  With respect to the Multiplier Theorem, there is a partial generalization to the nonabelian case due to Ma \cite{Ma_1984}:  

\begin{lemma}
\label{lem:multnonab}
 Let $S$ be a Type II PDS in a group $G$, and let $m$ be an integer that is relatively prime to $|G/G'|$, where $G'$ is the derived subgroup of $G$.  If $\overline{S}$ denotes the image of $S$ under the natural homomorphism $G \to G/G'$, then $\overline{S}^{(m)} = \overline{S}$.
\end{lemma}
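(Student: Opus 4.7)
The plan is to project everything down to the abelianization $G/G'$, where ordinary abelian character theory and a Galois argument become available, and then pull the conclusion back via Fourier inversion.

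First I would apply the natural ring homomorphism $\pi: \ZZ[G] \to \ZZ[G/G']$ to the defining identity $S^2 = k + \lambda S + \mu(G - S - 1)$. Writing $\Sigma$ for the sum of all elements of $G/G'$ and noting that $\pi(G) = |G'|\,\Sigma$, this yields
\[ \overline{S}^2 = k + \lambda \overline{S} + \mu\bigl(|G'|\,\Sigma - \overline{S} - 1\bigr) \]
in $\ZZ[G/G']$. For any character $\chi \in (G/G')^*$, applying $\chi$ gives $\chi(\overline{S}) = k$ when $\chi$ is trivial, and, since $\chi(\Sigma) = 0$ otherwise, shows that $\chi(\overline{S})$ is a root of the quadratic $F(x) = x^2 - (\lambda - \mu)x - (k-\mu)$ from Theorem~\ref{eigenvalues} whenever $\chi$ is nontrivial. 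Because $S$ is Type II, both roots of $F$ lie in $\ZZ$, so $\chi(\overline{S}) \in \ZZ$ for every $\chi \in (G/G')^*$.

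Next, since the $(m)$-operation commutes with $\pi$, I have $\pi(S^{(m)}) = \overline{S}^{(m)}$, and a direct calculation using $\chi(\bar g^m) = \chi(\bar g)^m$ gives $\chi(\overline{S}^{(m)}) = \chi^m(\overline{S})$ for every $\chi$. The key step is then Galois-theoretic: let $n$ denote the exponent of $G/G'$, so $n \mid |G/G'|$ and hence $\gcd(m,n) = 1$. Then $m$ induces an automorphism $\sigma_m \in {\rm Gal}(\mathbb{Q}(\zeta_n)/\mathbb{Q})$ acting on $n$-th roots of unity by $\zeta \mapsto \zeta^m$, and unwinding definitions shows $\sigma_m\bigl(\chi(\overline{S})\bigr) = \chi^m(\overline{S})$. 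Since $\chi(\overline{S})$ is a rational integer by the previous paragraph, $\sigma_m$ fixes it, so $\chi^m(\overline{S}) = \chi(\overline{S})$, and therefore $\chi(\overline{S}^{(m)}) = \chi(\overline{S})$ for every $\chi \in (G/G')^*$.

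Finally, I would invoke Fourier inversion in the abelian group $G/G'$ --- characters of an abelian group separate elements of its rational group algebra --- to conclude $\overline{S}^{(m)} = \overline{S}$ in $\ZZ[G/G']$. The only real subtlety is the Galois step, and the hypothesis $\gcd(m, |G/G'|) = 1$ is precisely what makes it legitimate: it is what allows $m$ to act as a Galois automorphism on every character value arising from $G/G'$, and it is the reason one can relax Ma's original coprimality condition on $|G|$ to the weaker condition on $|G/G'|$ stated here.
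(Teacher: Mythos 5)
Your proof is correct. The paper does not actually prove Lemma \ref{lem:multnonab} --- it is stated and attributed to Ma \cite{Ma_1984} --- so there is no in-paper argument to compare against; your reconstruction (push the group-ring identity down to $\ZZ[G/G']$, observe that every nontrivial character value of $\overline{S}$ is a root of $F(x)$ and hence a rational integer in the Type II case, apply the Galois automorphism $\zeta \mapsto \zeta^m$, and finish by Fourier inversion) is precisely the standard character-theoretic argument behind Ma's abelian multiplier theorem, correctly adapted to the abelianization. The only point worth making explicit is that $\overline{S}$ must be read as a group-ring element (i.e., with multiplicities), which is what your argument proves and is the right reading of the statement.
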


Unfortunately, this result is not always useful in practice: for example, we consider again the group $G = \langle \sigma, \tau \rangle \cong C_7 \rtimes_2 C_3$ and PDS $S$ arising from Theorem \ref{mainconstruction}.  As we saw in Section \ref{gen.nonabelian}, $m = 2$ is not a multiplier for $S$, but Lemma \ref{lem:multnonab} tells us that $m = 2$ will be a multiplier for $\overline{S}$ in $G/G'$.  However, in this case, $G' = \langle \sigma \rangle$ and $G/G' \cong C_3$, and so $\overline{S} = \overline{S}^{(2)} = \overline{S}^{(-1)}$, which we already know will be true since $S = S^{(-1)}$ for a PDS. 

\subsection{A useful class function}

There are a few known results that are extremely useful in the nonabelian setting, and these results have largely been inspired by the study of groups acting regularly on the point set of a generalized quadrangle.  The first result is a generalization of Benson's Lemma \cite[Lemma 4.3]{Benson_1970} to SRGs.  Benson's Lemma utilizes a technique attributed to Graham Higman, which calculates the value of a character of the automorphism group of an association scheme on an eigenspace; see \cite[pp. 89--91]{Cameron_1999}.  De Winter, Kamischke, and Wang \cite{DeWinter_etal_2016} proved a generalization for SRGs, which we state here specifically for PDSs; recall that $\sqrt{\Delta} = \theta_1 - \theta_2$.

\begin{lemma}
 Let $S$ be a Type II PDS with parameters $(v,k,\lambda, \mu)$ with eigenvalues $\theta_2 < \theta_1 < k$ in a group $G$, and let $x$ be a nontrivial element of the group $G$.  If $d_1(x)$ denotes the number of vertices of the corresponding SRG $\Gamma$ that $x$ sends to adjacent vertices in $\Gamma$, then
 \[ k - \theta_2 \equiv \mu - \theta_2(\theta_1 + 1) \equiv d_1(x) \pmod {\sqrt{\Delta}}.\]
\end{lemma}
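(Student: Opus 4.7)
The plan is to use Graham Higman's trace technique, exactly as in Benson's original lemma. Let $A$ be the adjacency matrix of $\Gamma = \Cay(G,S)$, viewed as acting on $\C^G$, and for each $x \in G$ let $P_x$ denote the permutation matrix of left multiplication by $x$, so that $(P_x)_{z,y} = 1$ iff $z = xy$. Because left multiplication is a graph automorphism, $P_x$ commutes with $A$, and hence $P_x$ preserves each eigenspace of $A$. The Type II hypothesis guarantees that the eigenvalues $k, \theta_1, \theta_2$ are integers, and the three eigenspaces $V_k, V_{\theta_1}, V_{\theta_2}$ are rationally defined.

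The first main step is to write down two trace identities. On $V_k = \langle \vec{1} \rangle$ the matrix $P_x$ acts as the identity; let $\alpha_i := \mathrm{tr}(P_x|_{V_{\theta_i}})$ for $i=1,2$. Because $x \neq 1$ acts fixed-point-freely on $G$, one has $\mathrm{tr}(P_x) = 0$, giving $1 + \alpha_1 + \alpha_2 = 0$. A direct diagonal computation shows $(AP_x)_{y,y} = A_{y, xy}$, so $\mathrm{tr}(AP_x) = \#\{y : y \sim xy\} = d_1(x)$, which yields $k + \theta_1 \alpha_1 + \theta_2 \alpha_2 = d_1(x)$. Eliminating $\alpha_2 = -1 - \alpha_1$ from these two equations gives
\[ d_1(x) = (k - \theta_2) + (\theta_1 - \theta_2)\, \alpha_1 = (k - \theta_2) + \sqrt{\Delta}\, \alpha_1, \]
which is the desired congruence modulo $\sqrt{\Delta}$ once we know $\alpha_1 \in \Z$. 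For the chain of equalities in the statement, I would simply observe that $\mu - \theta_2(\theta_1 + 1) = \mu - \theta_1\theta_2 - \theta_2 = \mu + (k-\mu) - \theta_2 = k - \theta_2$, using the standard relations $\theta_1 \theta_2 = -(k-\mu)$.

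The main obstacle is confirming the integrality of $\alpha_1$, since the above only shows $\sqrt{\Delta}\,\alpha_1 \in \Z$, leaving $\alpha_1 \in \tfrac{1}{\sqrt{\Delta}}\Z$ a priori. The argument I would use is the classical algebraic-integer dichotomy: since $P_x$ has finite order, its restriction to $V_{\theta_1}$ is diagonalizable with eigenvalues that are roots of unity, so $\alpha_1$ is a sum of roots of unity and hence an algebraic integer. Combined with the rational expression $\alpha_1 = (d_1(x) - k + \theta_2)/\sqrt{\Delta} \in \Q$, this forces $\alpha_1 \in \Z$, which is exactly what is needed to conclude that $\sqrt{\Delta}$ divides $d_1(x) - (k - \theta_2)$ in $\Z$. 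This step is where the Type II hypothesis and the finiteness of $G$ are both used essentially, and it is the one subtlety in what is otherwise a straightforward linear-algebra calculation.
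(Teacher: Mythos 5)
Your proof is correct and is exactly the Graham Higman trace argument underlying Benson's Lemma and its SRG generalization by De Winter, Kamischke, and Wang; the paper itself supplies no proof of this lemma, only the citation, but the technique it describes (computing the trace of an automorphism on each eigenspace) is precisely what you carry out, including the key algebraic-integer-plus-rational step that forces $\alpha_1 \in \Z$. The only quibble is that with the paper's convention $x \sim y \iff xy^{-1} \in S$ the regular action by graph automorphisms is \emph{right} multiplication rather than left (and one should note $V_k$ is one-dimensional because $\mu > 0$ makes $\Gamma$ connected), but neither point changes the computation.
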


Yoshiara \cite{Yoshiara_2007} used Benson's Lemma along with clever counting and group theoretical arguments to prove that no GQ of order $(t^2, t)$, where $t \geqslant 2$, has a group of automorphisms acting regularly on its point set.  Inspired by these results, Swartz and Tauscheck \cite{Swartz_Tauscheck_2021} proved corresponding results that apply to PDSs.  We state an equivalent result here based on the notation of this paper; in what follows, $\Cl(x)$ denotes the conjugacy class of the group element $x \in G$ and $C_G(x)$ denotes the centralizer of $x$ in $G$.  

\begin{lemma}
\label{lem:SwartzTauscheck}
 Let $S$ be a Type II PDS in a group $G$, let $x$ be a nontrivial element of $G$, and define  
 \[ \Phi(x) := |\Cl(x) \cap S||C_G(x)|.\]
 Then,
 \[ \Phi(x) \equiv \mu - \theta_2(\theta_1 + 1) \pmod {\sqrt{\Delta}}.\]
 In particular, if $\sqrt{\Delta}$ does not divide $\mu - \theta_2(\theta_1 + 1)$, then $\Cl(x) \cap S \neq \varnothing$.
\end{lemma}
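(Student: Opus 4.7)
The plan is to run a Benson/Higman-style trace argument on the adjacency matrix of the Cayley graph, but with the right regular representation playing the role of the ``character.'' Writing $\Lambda(S)=\sum_{s\in S}\lambda_s$ for the adjacency matrix and $\rho_y$ for the permutation matrix of right multiplication by $y$, the key fact is that left and right multiplication on $G$ commute, so $\rho_y$ commutes with $\Lambda(S)$ and hence preserves each eigenspace of $\Lambda(S)$. This lets us compute $\mathrm{tr}(\rho_y\Lambda(S))$ in two different ways and equate them.

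First I would compute the trace combinatorially. The $(g,g)$-entry of $\rho_y\Lambda(S)$ equals $1$ precisely when $g\sim gy$, i.e.\ when $g(gy)^{-1}=gy^{-1}g^{-1}\in S$, which, using $S=S^{(-1)}$, is equivalent to $gyg^{-1}\in S$. Orbit-stabilizer on the conjugation action then gives
\[
\mathrm{tr}(\rho_y\Lambda(S)) \;=\; |\{g\in G:\,gyg^{-1}\in S\}| \;=\; |\Cl(y)\cap S|\cdot|C_G(y)| \;=\; \Phi(y).
\]
Second, I would decompose the ambient $v$-dimensional space into the three eigenspaces of $\Lambda(S)$. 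Since $\rho_y$ preserves each eigenspace and acts as $1$ on the span of $\vec{1}$, writing $\chi_i(y)$ for the trace of $\rho_y$ on the $\theta_i$-eigenspace yields
\[
\Phi(y) \;=\; k + \theta_1\chi_1(y) + \theta_2\chi_2(y).
\]
Because $G$ acts regularly and $y\neq 1$, the permutation $\rho_y$ is fixed-point-free, so $\mathrm{tr}(\rho_y)=0$ gives $1+\chi_1(y)+\chi_2(y)=0$. Eliminating $\chi_2(y)$ produces
\[
\chi_1(y) \;=\; \frac{\Phi(y)-(k-\theta_2)}{\sqrt{\Delta}}.
\]

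The crucial integrality step is that $\chi_1(y)\in\mathbb{Z}$: because $S$ is Type II, $\theta_1,\theta_2\in\mathbb{Z}$ and the $\theta_1$-eigenspace of $\Lambda(S)$ is defined over $\mathbb{Q}$, so $\chi_1(y)$ is a rational number; on the other hand, $\chi_1(y)$ is a sum of eigenvalues of a permutation matrix, hence a sum of roots of unity and thus an algebraic integer, forcing $\chi_1(y)\in\mathbb{Z}$. Therefore $\sqrt{\Delta}$ divides $\Phi(y)-(k-\theta_2)$. To finish, I would apply Vieta: $\theta_1\theta_2=\mu-k$ gives $\theta_2(\theta_1+1)=(\mu-k)+\theta_2$, so
\[
k-\theta_2 \;=\; \mu - \theta_2(\theta_1+1),
\]
yielding the claimed congruence $\Phi(y)\equiv \mu-\theta_2(\theta_1+1)\pmod{\sqrt{\Delta}}$. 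The final ``in particular'' follows at once: if $\sqrt{\Delta}\nmid \mu-\theta_2(\theta_1+1)$, then $\Phi(y)\neq 0$, so $\Cl(y)\cap S$ cannot be empty.

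The only real obstacle is the integrality of $\chi_1(y)$, which has to be justified carefully; everything else is bookkeeping. A subtle issue to watch is the convention choice (left vs.\ right multiplication and the precise definition $x\sim y\iff xy^{-1}\in S$), because a careless convention can swap $y$ with $y^{-1}$ in the combinatorial count and would then require the additional symmetry $\Cl(y)\cap S=\Cl(y^{-1})\cap S$; choosing the right setup from the start avoids that.
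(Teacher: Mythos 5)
Your argument is correct, and it is exactly the Benson--Higman eigenspace-trace technique that the paper attributes to Swartz and Tauscheck \cite{Swartz_Tauscheck_2021}: the paper itself states Lemma \ref{lem:SwartzTauscheck} without proof, citing that reference, and describes the method as ``calculat[ing] the value of a character of the automorphism group \dots on an eigenspace,'' which is precisely your computation of $\mathrm{tr}(\rho_y\Lambda(S))$ two ways, using that $\rho_y$ commutes with $\Lambda(S)$, that $\rho_y$ is fixed-point-free for $y\neq 1$, and that the trace on the rational $\theta_1$-eigenspace is a rational algebraic integer. All the bookkeeping checks out, including $k-\theta_2=\mu-\theta_2(\theta_1+1)$ via $\theta_1\theta_2=\mu-k$, and your handling of the left/right convention via $S=S^{(-1)}$ is fine.
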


Note that $\Phi(x)$ is invariant on each conjugacy class of $G$ and hence a class function.  In particular, \textit{every} nonidentity central element of a group is its own conjugacy class.  Since the complement of a SRG is also a SRG, we can apply to the criterion of Lemma \ref{lem:SwartzTauscheck} to both a PDS and its complement in $G\backslash \{1\}$ to prove the following:

\begin{cor}
\label{cor:SwartzTauscheck}
 \begin{itemize}
  \item[(i)] If $\sqrt{\Delta}$ divides neither $\mu - \theta_2(\theta_1 + 1)$ nor $v - 2k + \lambda - \theta_2(\theta_1 + 1)$, then a group with a nontrivial center cannot contain a $(v, k, \lambda, \mu)$ PDS.
  \item[(ii)] If $\sqrt{\Delta}$ does not divide one of $\mu - \theta_2(\theta_1 + 1)$ or $v - 2k + \lambda - \theta_2(\theta_1 + 1)$, then a $(v, k, \lambda, \mu)$ PDS is genuinely nonabelian.
 \end{itemize}
\end{cor}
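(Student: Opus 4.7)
The plan is to apply Lemma~\ref{lem:SwartzTauscheck} twice: once to $S$ itself, and once to the complementary PDS $S^c := G - S - 1$, which is a PDS for the complementary SRG with parameters $(v, v-k-1, v-2k+\mu-2, v-2k+\lambda)$. The only algebraic input is to check that the congruence for $S^c$ has the same modulus as for $S$ but with a different right-hand side. Since the nontrivial eigenvalues of the complementary SRG are $\theta_1^c = -\theta_2 - 1$ and $\theta_2^c = -\theta_1 - 1$ (the index swap being forced by $\theta_1 > \theta_2$), one has $\sqrt{\Delta^c} = \theta_1^c - \theta_2^c = \sqrt{\Delta}$, and a short calculation gives
\[ \mu^c - \theta_2^c(\theta_1^c + 1) = (v-2k+\lambda) - (-\theta_1-1)(-\theta_2) = (v-2k+\lambda) - \theta_2(\theta_1+1). \]
So Lemma~\ref{lem:SwartzTauscheck} applied to $S^c$ asserts $\Phi^c(x) \equiv (v-2k+\lambda) - \theta_2(\theta_1+1) \pmod{\sqrt{\Delta}}$ for every nontrivial $x \in G$, where $\Phi^c(x) := |\Cl(x) \cap S^c| \cdot |C_G(x)|$.

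For part~(i), I would fix any nontrivial central element $z \in Z(G)$. Then $\Cl(z) = \{z\}$ and $C_G(z) = G$, so $\Phi(z), \Phi^c(z) \in \{0, v\}$, with $\Phi(z) = 0$ exactly when $z \notin S$ and $\Phi^c(z) = 0$ exactly when $z \in S$. Since $z \neq 1$, precisely one of the two vanishes, and feeding the vanishing one into the corresponding congruence forces $\sqrt{\Delta}$ to divide either $\mu - \theta_2(\theta_1+1)$ or $(v-2k+\lambda) - \theta_2(\theta_1+1)$, contradicting the hypothesis.

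For part~(ii), note that if $G$ is abelian then every nonidentity element is central, so the case analysis from part~(i) applies to \emph{every} nontrivial $g \in G$. Since a PDS with $0 < k < v-1$ contains at least one element $z$ and omits at least one nonidentity element $z'$, running the argument with $z'$ forces $\sqrt{\Delta} \mid \mu - \theta_2(\theta_1+1)$, while running it with $z$ (using $S^c$) forces $\sqrt{\Delta} \mid (v-2k+\lambda) - \theta_2(\theta_1+1)$. Contrapositively, if \emph{either} divisibility fails then no abelian PDS with parameters $(v,k,\lambda,\mu)$ exists, so every such PDS must live in a nonabelian group, and by the definition in Section~\ref{gen.nonabelian} is genuinely nonabelian.

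The entire argument is a direct unpacking of Lemma~\ref{lem:SwartzTauscheck} together with the observation that central elements are singleton conjugacy classes with full-group centralizer, so I do not anticipate any serious obstacle. The one place requiring care is the index swap $\theta_1 \leftrightarrow \theta_2$ when passing to the complementary eigenvalues, so that the resulting right-hand side is correctly expressed as $(v-2k+\lambda) - \theta_2(\theta_1+1)$ in the \emph{original} eigenvalues, matching the statement.
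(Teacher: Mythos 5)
Your proof is correct and follows essentially the same route as the paper: apply Lemma~\ref{lem:SwartzTauscheck} to the PDS and to its complement $G-S-1$ (whose key quantity is $(v-2k+\lambda)-\theta_2(\theta_1+1)$ with the same modulus $\sqrt{\Delta}$), and observe that for a central --- in particular, for every abelian-group --- element the class function $\Phi$ takes only the values $0$ and $v$, forcing the divisibility conditions. You are somewhat more explicit than the paper, which proves only part~(ii) (deferring (i) to the cited reference and phrasing the abelian case as ``every nonidentity conjugacy class meets $S$, so $S=G\setminus\{1\}$''), and your verification of the complementary eigenvalue identity is accurate.
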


\begin{proof}[Proof of (ii)]
 We include a proof of (ii), since it is not stated explicitly in \cite{Swartz_Tauscheck_2021}.  We assume without loss of generality that $\sqrt{\Delta}$ does not divide $\mu - \theta_2(\theta_1 + 1)$.  By Lemma \ref{lem:SwartzTauscheck}, for any nonidentity element $x$ in a group $G$ of order $v$, $\Cl(x) \cap S \neq \varnothing$.  In particular, $G$ cannot be abelian, since this implies every nonidentity element is contained in the PDS, and so any such PDS is necessarily genuinely nonabelian.  
\end{proof}

We include two examples illustrating the utility of these ideas.

\begin{example}
 Consider a $(p^3, p^2 + p - 2, p - 2, p + 2)$ PDS, where $p$ is an odd prime.  In this case, $\theta_1 = p - 2$, $\theta_2 = -p - 2$, $\sqrt{\Delta} = 2p$, $\mu - \theta_2(\theta_1 + 1) = p(p + 2)$, and $v - 2k + \lambda - \theta_2(\theta_1 + 1) = p^2(p - 1)$.  Thus, $\sqrt{\Delta}$ divides $v - 2k + \lambda - \theta_2(\theta_1 + 1)$ but not $\mu - \theta_2(\theta_1 + 1)$, and hence any PDS is genuinely nonabelian by Corollary \ref{cor:SwartzTauscheck}.  
 
 Let $G$ be an extraspecial group of order $p^3$ with exponent $p^2$ isomorphic to $C_{p^2} \rtimes C_p$. The centralizer of any noncentral element has order $p^2$, and hence there are
\[ \frac{p^3 - p}{p} + (p - 1) = p^2 + p - 2\]
nonidentity conjugacy classes in $G$.  Since every nonidentity conjugacy class must meet a $(p^3, p^2 + p - 2, p - 2, p + 2)$ PDS nontrivially by Lemma \ref{lem:SwartzTauscheck}, this means, if such a group contains a $(p^3, p^2 + p - 2, p - 2, p + 2)$ PDS, then every nonidentity conjugacy class would necessarily meet the PDS in exactly one element.  This is exactly what happens in the PDS constructed in \cite{Swartz_2015}; see, in particular, \cite[Lemma 5]{Swartz_2015}.
\end{example}

\begin{example}
For a triangular graph $T_n$ with parameters
$(n(n-1)/2, 2(n-2), n-2, 4)$, $\theta_1= n-4, \theta_2 = -2$ and $\sqrt{\Delta}=n-2.$
Thus, 
\[\mu-\theta_2(\theta_1+1)=4+2(n-3)=2n-2 \equiv 2 \pmod {\sqrt{\Delta}},\]
so by Lemma \ref{lem:SwartzTauscheck} every nonidentity conjugacy class meets the PDS nontrivially.

The conjugacy classes of $C_7 \rtimes C_3$ are $\{1\}$, $\Cl(\sigma),$ $\Cl(\sigma^3)$, $\Cl(\tau)$, and $\Cl(\tau^2)$ of sizes $1$, $3$, $3$, $7$, and $7$, respectively.
So, for $x\ne 1$,  $\Phi(x)$ has values 
\[ 7|\Cl(\sigma) \cap S|, \; 7|\Cl(\sigma^3) \cap S|, \; 3|\Cl(\tau) \cap S|,\; 3|\Cl(\tau^2) \cap S|.\]
Since $\sqrt{\Delta} = 5$, these must be each must be congruent to 2 modulo 5 while adding up to $k=10.$
This forces 
\[ |\Cl(\sigma) \cap S| = |\Cl(\sigma^3) \cap S|=1,\; |\Cl(\tau) \cap S|=|\Cl(\tau^2) \cap S| = 4.\]
With this information, it is easy to write out the PDS.
\end{example}

The constraints imposed by the function $\Phi$ appear to be extremely powerful in certain contexts.  For example, Ott \cite{Ott_2023} recently used similar ideas to prove that any group $G$ containing a $((s+1)(s^2+1), s^2+s+1, s+1)$ reversible difference set containing the identity must have even order, settling a thirty-year-old conjecture of Ghinelli \cite{Ghinelli_1992}; this equivalently shows that any group $G$ containing a $((s+1)(s^2+1), s^2+s, s-1, s+1)$ PDS must have even order. 

\begin{question}
 Can further results about the function $\Phi$ be used to prove the nonexistence of other PDSs in nonabelian groups?
\end{question}

Obviously, character theory has been an extremely powerful tool in the analysis of PDSs in abelian groups.  Moreover, the function $\Phi$, as noted above, is a class function.  We may replace the family of characters of an abelian group with the set of irreducible representations of a general group.  This worked in finding a $(100, 44, 18,20)$ genuinely nonabelian PDS; see  \cite{Smith_1995}.  Unfortunately, analysis of the irreducible representations of degree greater than one requires a study of matrices over the complex numbers, a much more complicated theory.

\begin{question}
 How can we use the irreducible representations of a nonabelian group $G$ to study the PDSs contained in $G$?
\end{question}

\subsection{New PDSs from known combinatorial objects}

As demonstrated by the tremendous success of Feng and Li finding new examples of PDSs by studying the full automorphism groups of known generalized quadrangles \cite{Feng_Li_2021} -- as well as the results of Theorems \ref{mainconstruction} and \ref{thm:godsil} -- it stands to reason that the study of other combinatorial objects with large automorphism groups could prove fruitful.  Moreover, J\o rgensen and Klin constructed $15$ distinct PDSs (with parameters $(100,22,0,6)$, $(100,36,14,12)$, $(100,45,20,20)$, $(100,44,18,20)$) in four distinct nonabelian groups of order $100$ by proceeding exactly in this manner \cite{Jorgensen_Klin_2003}.  Indeed, following along this line of reasoning, Feng, He, and Chen constructed nonabelian $2$-groups with exponent $4$, $8$, and $16$ and of nilpotency class $2$, $3$, $4$, and $6$ for the Davis-Xiang graphs and RT2 graphs \cite{Feng_He_Chen_2020} (which are Latin Square type; see Example \ref{LS}), and they remark that their ``results suggest that a good way to construct nonabelian groups that contain nontrivial partial difference sets and amorphic nonabelian Cayley association schemes is to consider the regular subgroups of the known strongly regular graphs and known amorphic association schemes.''  

We will further show the utility of this approach by constructing examples of $(512, 133, 24, 38)$ PDSs.  (Such a PDS arises from Theorem \ref{thm:godsil} with $q = 8$ and $r = 3$.)  In this case, $\theta_1 = 5$, $\theta_2 = -19$, $\sqrt{\Delta} = 24$, and $\mu - \theta_2(\theta_1 + 1) = 152$, so, by Lemma \ref{lem:SwartzTauscheck}, every nontrivial conjugacy class must meet such a PDS, which we already know will be genuinely nonabelian by Corollary \ref{cor:godsil}.  

Proceeding as in the proof of Theorem \ref{thm:godsil}, we may construct a SRG with these parameters by starting with a GQ ${\rm H}(3,64)$ of order $(64, 8)$, and, indeed, this graph can be constructed using the packages FinInG \cite{FinInG} and GRAPE \cite{GRAPE} in GAP \cite{GAP4}.  The full automorphism group of the graph has order $193536$, and a Sylow $2$-subgroup of the full automorphism group has order $1024$.  While Theorem \ref{thm:godsil} guarantees the existence of one group of order $512$ acting regularly on the vertices of this graph, an examination of all maximal subgroups of a Sylow $2$-subgroup yields the following:

\begin{thm}
 \label{thm:512}
 There exist two nonisomorphic nonabelian groups of order $512$ that contain $(512, 133, 24, 38)$ PDSs.
\end{thm}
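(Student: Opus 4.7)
The plan is to make the computer search described just before the theorem statement rigorous and to extract from it two genuinely different group structures. I would start from the strongly regular graph $\Gamma$ with parameters $(512,133,24,38)$ constructed as in the proof of Theorem \ref{thm:godsil} with $q=8$, $r=3$, using the GQ ${\rm H}(3,64)$; this construction can be realized concretely in GAP with the FinInG and GRAPE packages, so that $\Gamma$ is an explicit object on which the full automorphism group $A = {\rm Aut}(\Gamma)$ can be computed. By hypothesis $|A| = 193536 = 2^{10}\cdot 189$, so any group of order $512$ acting regularly on the $512$ vertices of $\Gamma$ must be a subgroup of index $2$ in a Sylow $2$-subgroup $P$ of $A$.

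With $P$ in hand, the strategy is to enumerate the maximal subgroups of $P$ (there are only finitely many, and their number is small in practice), and for each such subgroup $H \le P$ of order $512$, test (i) whether $H$ acts transitively on $V(\Gamma)$ and (ii) whether only the identity fixes a vertex; together these give regularity. For every $H$ passing the test, the set $S_H := \{h \in H \setminus \{1\} : h \cdot v_0 \sim v_0\}$, for any chosen base vertex $v_0$, is a $(512,133,24,38)$ partial difference set in $H$ by Lemma \ref{groupringPDS}. I would then use GAP's \texttt{IdGroup} (or \texttt{StructureDescription}) to identify each such $H$ up to isomorphism.

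To finish, I need to exhibit at least two nonisomorphic nonabelian $H$'s among these. One is guaranteed to appear, namely the group $G$ from Theorem \ref{thm:godsil} and Remark \ref{rem:godsil}, which for $q=8$ is the Sylow $2$-subgroup of ${\rm PSU}(3,8)$, of exponent $4$. Thus it suffices to find a second regular subgroup $H'$ whose small-group ID differs from that of $G$; any differing invariant (exponent, derived length, nilpotency class, or the orders of $Z(H')$ and $[H',H']$) will certify nonisomorphism. That both groups are nonabelian is automatic from Corollary \ref{cor:godsil}, which shows the parameter set $(512,133,24,38)$ is genuinely nonabelian.

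The main obstacle is pragmatic rather than mathematical: making sure the subgroup search inside $P$ is exhaustive enough to guarantee that the second example actually exists (as opposed to possibly being missed), and certifying the isomorphism type of each regular $H$ unambiguously. Both are routine for groups of order $512$ with modern computer algebra systems, and any two nonisomorphic $H$'s thereby produced constitute a proof. An additional sanity check is to verify directly, in each candidate $H$, the group-ring identity of Definition \ref{PDSdef} for $S_H$, which confirms the PDS parameters independently of the graph computation.
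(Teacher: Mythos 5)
Your proposal matches the paper's approach essentially exactly: the authors build the SRG from ${\rm H}(3,64)$ with FinInG and GRAPE, take a Sylow $2$-subgroup of the order-$193536$ automorphism group, examine its maximal subgroups for regular actions, and distinguish the resulting group from the one in Theorem \ref{thm:godsil} by exponent ($4$ versus $8$ for \texttt{SmallGroup(512,4508)}), with the explicit second PDS recorded in Appendix \ref{appendix}. The only difference is presentational: the paper supplies the witness data rather than re-describing the search.
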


\begin{proof}
 One group is the one constructed in Theorem \ref{thm:godsil}, and the other group and corresponding PDS are listed explicitly in Appendix \ref{appendix}.  They are nonisomorphic since the group constructed in Theorem \ref{thm:godsil} has exponent $4$ (see Remark \ref{rem:godsil}), while the group listed in Appendix \ref{appendix} (identified as \verb|SmallGroup(512,4508)| in GAP \cite{GAP4}) has exponent $8$.
\end{proof}

In light of Theorem \ref{thm:512}, the authors believe that a full examination -- in the spirit of \cite{Feng_Li_2021} -- of the groups acting regularly on the vertices of the Krein covers of complete graphs constructed by Godsil \cite{Godsil_1992} would be quite interesting.  With this in mind, we end the paper with one final question.

\begin{question}
 Which known SRGs have a nonabelian group of automorphisms acting regularly on vertices?  In particular, are there other infinite families of such SRGs and PDSs?
\end{question}

\newcommand{\etalchar}[1]{$^{#1}$}

\appendix
\section{Appendix: The second (512, 133, 24, 38) PDS}
\label{appendix}

In this Appendix, we include explicit code that can be read into GAP \cite{GAP4} for the second $(512, 133, 24, 38)$ PDS mentioned in Theorem \ref{thm:512}.

\begin{verbatim}
G2:= SmallGroup(512, 4508);;

gen:= GeneratorsOfGroup(G2);;

f1:= gen[1];;
f2:= gen[2];;
f3:= gen[3];;
f4:= gen[4];;
f5:= gen[5];;
f6:= gen[6];;
f7:= gen[7];;
f8:= gen[8];;
f9:= gen[9];;

S2:= [ f9, f7, f7*f9, f7*f8, f7*f8*f9, f8*f9, f8, f4*f5*f6*f7*f9, 
  f4*f5*f6*f7, f1*f2*f3*f4*f6*f7*f8, f1*f2*f3*f5*f7*f9, f1*f2*f3*f5, 
  f1*f2*f3*f4*f6*f8*f9, f1*f2*f3*f5*f6*f7*f9, f1*f2*f3*f5*f6*f7, 
  f1*f2*f3*f4*f8, f1*f2*f3*f4*f8*f9, f6*f8, f4*f5*f8, f4*f5*f7, 
  f6*f7*f9, f1*f2*f3*f6*f9, f1*f2*f3*f4*f5*f7*f9, 
  f1*f2*f3*f4*f5*f7*f8*f9, f1*f2*f3*f6*f8*f9, f4*f8*f9, f5*f6*f8*f9, 
  f5*f6, f4*f9, f5, f4*f6*f8, f4*f6*f7*f8*f9, f5*f7, 
  f1*f2*f3*f4*f5*f6*f7*f8, f1*f2*f3, f1*f2*f3*f7*f8, 
  f1*f2*f3*f4*f5*f6, f2*f5*f7*f8, f2*f4*f6*f9, f2*f4*f6*f8, f2*f5*f9, 
  f1*f3*f8*f9, f1*f3*f4*f5*f6, f1*f3, f1*f3*f4*f5*f6*f7*f8, 
  f1*f3*f4*f5*f7*f8*f9, f1*f3*f4*f5*f8*f9, f1*f3*f6*f7*f8, 
  f1*f3*f6*f7*f8*f9, f2*f4*f7*f8*f9, f2*f5*f6*f7*f8, f2*f5*f6, 
  f2*f4*f7*f9, f1*f3*f4*f8, f1*f3*f5*f6*f8*f9, f1*f3*f5*f6, 
  f1*f3*f4*f7, f2*f4*f5*f8*f9, f2*f6*f7*f8*f9, f2*f4*f5*f7, f2*f6*f7, 
  f2*f4*f5*f6*f8, f2*f7, f2*f4*f5*f6*f7, f2*f7*f8, f1*f3*f4*f6*f9, 
  f1*f3*f5*f7*f8*f9, f1*f3*f5*f7*f8, f1*f3*f4*f6*f7*f9, 
  f2*f3*f4*f7*f9, f2*f3*f5*f6*f8, f2*f3*f5*f6*f7*f8, f2*f3*f4*f8, 
  f1*f4*f5*f7*f9, f1*f6*f8, f1*f6*f8*f9, f1*f4*f5*f7*f8, 
  f1*f4*f5*f6*f7*f8, f1*f4*f5*f6*f7*f9, f1*f7*f9, f1*f7, 
  f2*f3*f5*f7*f8, f2*f3*f4*f6*f7*f9, f2*f3*f4*f6*f9, f2*f3*f5*f9, 
  f1*f5*f8*f9, f1*f4*f6, f1*f5*f7, f1*f4*f6*f7*f9, f2*f3*f4*f5*f6*f8, 
  f2*f3*f7*f8, f2*f3*f4*f5*f6*f7*f8*f9, f2*f3, f2*f3*f6*f7, 
  f2*f3*f4*f5*f8, f2*f3*f6*f9, f2*f3*f4*f5*f7, f1*f5*f6*f9, f1*f4*f7, 
  f1*f5*f6*f7*f8, f1*f4*f9, f3*f4*f5*f7, f3*f6*f8, f3*f6*f8*f9, 
  f3*f4*f5*f8*f9, f1*f2*f5*f6*f7, f1*f2*f4*f7*f8, f1*f2*f5*f6*f9, 
  f1*f2*f4*f7, f1*f2*f4*f6*f7*f8*f9, f1*f2*f5*f7*f8*f9, 
  f1*f2*f4*f6*f8*f9, f1*f2*f5*f7, f3*f9, f3*f4*f5*f6*f7*f9, f3*f7*f9, 
  f3*f4*f5*f6*f7*f8*f9, f1*f2*f7*f8, f1*f2*f8, f1*f2*f4*f5*f6*f8, 
  f1*f2*f4*f5*f6*f9, f3*f5*f9, f3*f4*f6*f7, f3*f5*f7*f9, 
  f3*f4*f6*f7*f8, f3*f5*f6, f3*f5*f6*f9, f3*f4*f8, f3*f4*f7*f9, 
  f1*f2*f4*f5*f7, f1*f2*f6*f7*f8*f9, f1*f2*f4*f5*f9, f1*f2*f6*f7*f9 ];;
\end{verbatim}

\end{document}